% Version 7: Mar 27 2020 Raymond  (Ready for submission after clearing up the footnotes)
% Version 6: Dec 22 2019 Raymond
% Version 5: Dec 22 2019 Ali
% Version 4: Nov 28 2019 Raymond
% Version 3: Sep 18 2019 Ali
% Version 2: Jul 31 2019   Raymond
% Version 1: Jul 23 2019   Ali

%------------------------------------------------------------------------------
% Beginning of journal.tex
%------------------------------------------------------------------------------
%
% AMS-LaTeX version 2 sample file for journals, based on amsart.cls.
%
%        ***     DO NOT USE THIS FILE AS A STARTER.      ***
%        ***  USE THE JOURNAL-SPECIFIC *.TEMPLATE FILE.  ***
%
% Replace amsart by the documentclass for the target journal, e.g., tran-l.
%
\documentclass[10pt]{amsart}

%     If your article includes graphics, uncomment this command.

\usepackage{tasks}
\usepackage{hyperref}
\usepackage{blindtext}
\usepackage{setspace}
\usepackage{marginnote}
\usepackage{color}
\usepackage{amsrefs}
\usepackage{mathtools}
\usepackage{graphicx}  
\usepackage{dirtytalk}
\usepackage[top=3.5cm, bottom=2.5cm, outer=2.5cm, inner=2.5cm]{geometry}% margines
\usepackage{hyperref}
\hypersetup{linkcolor=red,
citecolor=black
}
\usepackage{amsmath}

\newtheorem{thm}{Theorem}[section]
\newtheorem{lem}[thm]{Lemma}
\newtheorem{prop}[thm]{Proposition}
\newtheorem{cor}[thm]{Corollary}

\newtheorem{lemma}[thm]{Lemma}
\theoremstyle{definition}

\theoremstyle{remark}
\newtheorem{re}[thm]{Remark}

\numberwithin{equation}{section}

\usepackage[utf8]{inputenc}
\usepackage[english]{babel}
\usepackage{paralist}
\usepackage{amsthm}
 
\theoremstyle{definition}
\newtheorem{definition}{Definition}[section]
 
\theoremstyle{remark}

 \usepackage{bm,amsmath}

\newcommand{\RN}[1]{%
  \textup{\uppercase\expandafter{\romannumeral#1}}%
}

\newcommand{\Rey}{\mathcal{R}e }

\newcommand{\grad}{\nabla}

\allowdisplaybreaks[4]
%\setstretch{.1cm}
%space beween lines
\begin{document}

\title{On  the zeroth law of turbulence for the stochastically forced Navier-Stokes equations}

%    Information for first author

\author{Yat Tin Chow}
\address{Department of Mathematics, University of California, Riverside, CA 92507, USA}
\email{yattinc@ucr.edu}

\author{Ali Pakzad}
\address{Department of Mathematics, Indiana University Bloomington, IN 47405, USA}
\email{apakzad@iu.edu}

\maketitle
\setcounter{tocdepth}{1}

\begin{abstract}
We consider  three-dimensional stochastically forced Navier–Stokes equations subjected to white-in-time (colored-in-space) forcing in the absence of boundaries.  Upper and lower bounds of the mean value of the time-averaged energy dissipation rate, $\mathbb{E} [\langle\varepsilon \rangle] $, are derived directly from the equations. First, we show that for a    weak (martingale)  solution to the stochastically forced Navier–Stokes equations, 
\[ \mathbb{E} [\langle\varepsilon \rangle]  \leq G^2 + (2+ \frac{1}{\Rey})\frac{U^3}{L},\]
where $G^2$ is the total energy rate supplied by the random force, $U$ is the root-mean-square velocity, $L$ is the longest length scale in the applied forcing function, and $\Rey$ is the Reynolds number. Under an additional assumption of energy equality, we also derive a lower bound if the energy rate given by the random force dominates the deterministic behavior of the flow in the sense that $G^2 > 2 F U$, where $F$ is the amplitude of the deterministic force. We obtain,
\[\frac{1}{3}  G^2  - \frac{1}{3} (2+ \frac{1}{\Rey})\frac{U^3}{L}   \leq  \mathbb{E} [\langle\varepsilon \rangle] \leq G^2 + (2+ \frac{1}{\Rey})\frac{U^3}{L}\,.\]
In particular, under such assumptions, we obtain \textit{the zeroth law of turbulence} in the absence of the deterministic force as,
\[\mathbb{E} [\langle\varepsilon \rangle]  = \frac{1}{2} G^2.\]
Besides,  we also obtain variance estimates of the dissipation rate for the model.
 \end{abstract}

\section{INTRODUCTION}

The stochastic Navier-Stokes equations are used as a complementary model to the deterministic one to better understand the role of small perturbations and randomness in turbulent flows.  This paper is concerned with the  stochastically forced Navier–Stokes equations on $\mathbb{T}^3$:
 \begin{equation} \label{SNSE}
\begin{split}
 d u +  ( u \cdot \nabla u -  \nu \Delta u + \nabla & p  ) \, dt=    f(x) \,  dt  +   g(t) \, d w (t ; \omega),\\
& \grad \cdot u  =0   ,
\end{split}
\end{equation}
which describe the motion of a viscous, incompressible fluid in a periodic domain.   In (\ref{SNSE}) the stochastic process $u$ and $p$ are the velocity field and the pressure respectively, and $\nu$ is the viscosity.  The  applied force is assumed to be a mean zero, white-in-time and colored-in-space Gaussian process, including a deterministic part $f$, and a stochastic part as a Wiener process on a separable Hilbert space  given by,
$$g(t) \, d w (t : \omega) := \sum\limits_{k} g_k(t) \, e_k (x) \, dW_k(t; \omega).$$
Here  $\{W_k\}$  is a countable family of independent $3$-dimensional Brownian motions over a stochastic basis $(\Omega, \mathcal{A}, \mathcal{F}, \mathbb{P})$,  and $\{g_k\}$ is a family of  given functions such that $\sum\limits_{k}  |g_k(t)|^2 < \infty$,  and $\{ e_k \}$ is a countable family of orthonormal  basis in that particular Hilbert space.   The goal of this paper is to study the effect of the noise on a key measurement of turbulent flow, namely the dissipation rate.

%This paper is concerned with the three-dimensional Navier-Stokes equation for an incompressible fluid in the absence of boundaries, in the long time limit, with a random perturbation of the body forces, (\ref{SNSE}). 

Adding a term representing the white noise to the basic governing equations is natural for both practical and theoretical applications.  The stochastically-forced term can be used to account for both numerical and empirical uncertainties. In particular, in the context of fluid modeling, complex phenomena related to turbulence may be modeled by stochastic perturbation, considering the fact that the onset of turbulence is often related to the randomness of background movement (see Chapter 3 of \cite{P00}).  In addition, there are many examples which support the stabilization of Navier-Stokes equations (and other PDEs) by noise (see, e.g. \cite{B01},  \cite{CLM01},  \cite{FSQD19}, \cite{K99}).    The study of the connections between Navier–Stokes equations and stochastic evolution has a long history.  This can be traced back to a work of Bensoussan and Temam \cite{BT73} in 1973.  Since then there have been a lot of studies on the stochastic Navier-Stokes equations in literature, see for example  \cite{BCPW19},  \cite{B00},  \cite{BP00}, \cite{CI08}, \cite{CI11}, \cite{MR04}, \cite{MR05}, and  \cite{WW15} and the references therein.   In the study of evolution equations of stochastic Navier-Stokes, one can consider weak
solutions of martingale type or strong solutions (see \cite{B00} and \cite{FG95} and the references therein for more details on the difference between strong and martingale solutions in this context). In this paper we consider martingale solutions (Definition \ref{0toT}),  which are weak in both the sense of PDE theory and stochastic analysis.  It is worth mentioning that martingale solutions exist and satisfy the energy inequality (\ref{EnergyIneq1}) \cite{FG95,R06}.

 %As two applications, in aeronautical applications random forcing of the Navier-stokes equation models structural vibrations. Moreover,  in atmospheric dynamics, unknown external forces such as sun heating and industrial pollution can be represented as random forces. 

On the other hand, theoretical studies of turbulence usually employ a statistical description. Indeed, much of the classical turbulence theories, such as the famous Kolmogorov's conventional turbulence theory, are presented in the statistical forms (see the book by Frisch \cite{F95}).  The bulk (space and time-averaged) dissipation rate per unit mass $\langle \epsilon \rangle$ is one quantity of particular interest due to its production as a result of the turbulent cascade in the high Reynolds number vanishing viscosity limit \cite{D09}.   Kolmogorov  argued that in a turbulent fluid at large Reynolds number,  the energy dissipation rate per unit volume is essentially independent of the viscosity \cite{K41}.    Based on the concept of the energy cascade in turbulence, the rate of energy dissipation corresponds to the rate of transfer of energy from large to small scales. Hence, by a dimensional consideration, the energy dissipation rate per unit volume must take the form constant times  $\frac{U^3}{L}$,  where $U$ and $L$ are  global velocity and length scale.  Moreover, in the low Reynolds number,  the rate of energy dissipation in laminar flow scaled as  $\frac{1}{\Rey} \frac{U^3}{L}$. Therefore,  the expectation is that as $\Rey$ increases,  the flow will cross over from a laminar state  to a turbulent one with overall dissipation  independent of the viscosity \cite{D09}, \cite{F95}, \cite{P00}, i.e.,  

\begin{equation}\label{E-Scaling}
\langle \varepsilon\rangle \sim  \big(1+ \frac{1}{\Rey} \big)\frac{U^3}{L}.
\end{equation}

\subsection*{Zeroth law of turbulence} The  zeroth law of turbulence states that for fixed forcing, the rate of energy dissipation tends to a nonzero constant as the viscosity vanishes $\nu \rightarrow 0.$\\

Up to now no rigorous proof of this fact has been given.  A rigorous upper bound exhibiting this property has been found  by Doering and Constantin \cite{DC92} in 1992 for a shear boundary driven channel flow, and by Doering and Foias  \cite{DF02} in 2002 for  body-force-driven steady-state turbulence directly from the incompressible Navier-Stokes equations. These works build on Busse \cite{B70}, Howard \cite{H72}, and others.   Later, these approaches was substantially developed and extended by many authors (see, e.g., \cite{BJMT14},  \cite{DLPRSZ18},  \cite{DC96}, \cite{K97}, \cite{L16},  \cite{L02},  \cite{M94}, \cite{AP16}, \cite{AP18}, \cite{AP19},  \cite{S98},   \cite{W10},  \cite{W00},  \cite{W97},   etc.). For a good review on the recent results, including theoretical, computational and experimental, see the paper \cite{V14}.

In this paper, we aim to provide a mathematically rigorous derivation of the zeroth law of turbulence for solutions to (\ref{SNSE}) with some
hypotheses.  We first  focus on stochastically weak (martingale) solutions and  provide upper bounds on the mean value of the dissipation rate, $\mathbb{E} [\langle\varepsilon \rangle] $,  for this class of solutions.   We  then present a lower bound on  $\mathbb{E} [\langle\varepsilon \rangle] $ under an additional assumption of an energy equality.  However, a priori, one only has an energy inequality for the weak solutions, \say{measurements in all flows of real fluid in the three dimensions \textit{satisfies the energy equality}, in concert with the basic conservation laws of physics}  (page 47 of \cite{FMRT01}).  Whether a solution satisfies an energy equality or an energy inequality is related to the types of singularities that the solution may carry. It is proved in \cite{FR02}  that at every time the set of singularities is empty with probability one for a class of weak solutions of (\ref{SNSE}) which describes a fluid in a turbulent regime.    Moreover,   strong pathwise solutions  of the stochastic Navier-Stokes equations are shown to  uniquely exist  up to a maximal stopping time $\tau(\omega)$, and satisfy the energy equality up to $\tau(\omega)$  \cite{B00,strong}. We would also like to remark that  there has been also a number of papers studying conditions implying energy equality in the deterministic case,  e.g.,  \cite{DR00}, \cite{S77}, \cite{JL16}, and \cite{LS18}.

\subsection*{Organization of this paper}   The rest of this paper is organized as follows: in Section \ref{Section2}, we define the problem with its boundary and data conditions,  present definitions and the setting for the analysis. Then in Section \ref{Section3},  Theorem \ref{MainThm1},  we prove the upper bound results of these considerations.   Next in Section \ref{Section4}, Theorem \ref{MainThm2},   we prove the central result of this paper, a lower bound on the mean value of the dissipation rate,  with the extra assumption of energy balance.   We show that the hypothesis of high Reynolds number independence of the turbulent energy dissipation rate and the viscosity  holds as a lower and an upper bound for this setting.  Finally in Corollary \ref{Cor2} the exact dissipation rate, the zeroth law of turbulence, is obtained in the absence of the deterministic force.  In all these cases, a priori, the random force could pump up the  energy in the system destroying the independence of the energy dissipation of the viscosity. We prove that this is not so.  An estimate of the variance under further assumptions is also calculated in Section \ref{Section5}.  The concluding Section \ref{Section_end}  contains   some open problems in this direction.

 \section{Preliminaries}\label{Section2}

 We begin by considering some basic function spaces. Throughout this article, $D = [0, \ell]^3$ is a periodic box, i.e. we take the standand equivalent relationship $\sim$ on $\partial D$ such that $D/\sim \, \cong \mathbb{T}^3$. 
Now we denote,
$$ C^{\infty}_{\text{per}, \text{div}} (D) :=  \left\{ v \in C^{\infty}(D): \,   v  \mbox{\hspace{1pt} periodic on } D  \,, \,  \nabla \cdot v =0  \mbox{\hspace{5pt} in } D \right\}. 
$$
For any $\alpha \in \mathbb{R}$,   consider the following sobolev space,
$$ H^\alpha_{\text{per}, \text{div}}(D) := \overline{ C^{\infty}_{\text{per}, \text{div}} (D) }^{ {H}^{\alpha} (D) }\,, $$
where the standard $H^{\alpha} (D)$ norm is given by,
$$ \| \phi \|^2_{H^{\alpha} (D)} := (2 \pi)^3 \sum_{n\in \mathbb{Z}^3}(1 + \|n\|^2)^{\alpha} |\hat{\phi}(n)|^2 \,. $$
When $\alpha = 0$, we simplify the notation and denote $L^2_{\text{div}} (D) := H^0_{\text{per}, \text{div}}(D) $.  We write the standard $L^2(D)$ norm and inner product $\|\cdot\|$ and $(\cdot , \cdot)$ and drop the $H^{0} (D)$ subscript. We also denote the $L^{p}(D)$ norms by $\|\cdot\|_{p}$.  
Let us also use the same notation $(\cdot , \cdot)$ to denote the dual pairing between $H^\alpha_{\text{per}, \text{div}}(D) $ and its dual $( H^\alpha_{\text{per}, \text{div}} (D) ) '$ via the $L^2$ pivoting,
\[
H^\alpha_{\text{per}, \text{div}}(D)  \subset L^2_{\text{div}} (D) \subset ( H^\alpha_{\text{per}, \text{div}} (D) ) ' \, ,
\]
if no confusion arises.
Moreover, we have  $D(A) = H^2(D) \bigcap H^1_{\text{per}, \text{div}}(D) $, and  the   stokes operator $A$ is defined as,
 \begin{equation*}
\begin{split}
& A(u) \coloneqq   -P_L \Delta u,\\
A : D(A) &\subset L^2_{\text{div}} (D)  \rightarrow L^2_{\text{div}} (D) ,
\end{split}
\end{equation*}
where $P_L: L^2(D) \rightarrow L^2_{\text{div}} (D)$ is the Helmholtz–Leray projection.  The operator $A$ is a positive definite  with a sequence of real eigenvalues $\{\lambda_k\}_{k \in \mathbb{N}}$,  and  $\|A^{1/2} (\cdot )\|^2 \geq \lambda_1 \|\cdot\|^2$,  (see e.g. \cite{FMRT01} for more details). We also notice  $H^1_{\text{per}, \text{div}}(D) $ coincide with $ D(A^{1/2}) $, and we can endow $H^1_{\text{per}, \text{div}}(D) $ with the norm $\|A^{1/2} (\cdot )\|^2$.  

 We define the following bilinear operator $B : H^1_{\text{per}, \text{div}}(D)   \times H^1_{\text{per}, \text{div}}(D)  \rightarrow  \left( H^1_{\text{per}, \text{div}}(D)  \bigcap L^d (D) \right)' $ as, 
\begin{eqnarray*}
( B(u,v), z ) = \int_{D} z(x) \cdot (u(x) \cdot \nabla) v(x) dx
\end{eqnarray*}
for all $z \in H^1_{\text{per}, \text{div}}(D)  \bigcap L^d (D)$.   By imcompressibility, and after using integration by part,  one can show that,
\[
( B(u,v), z ) = - ( B(u,z), v ) \,.
\]
From \cite{FG95,bilinear}, $B$ can be extended continuously to 
\begin{eqnarray*}
B: L^2_{\text{div}} (D) \times L^2_{\text{div}} (D)& \rightarrow &  D(A^{-\alpha}) 
\end{eqnarray*}
for some $\alpha > 1$.

Now given $(\Omega, \mathcal{A},\mathcal{F},  \mathbb{P})$ a complete, filtered probability space equipped with the Brownian filtration satisfying usual condition, $  \mathcal{F}= \{ \mathcal{F}_t; t \in [0, T]\}$,  which is a non-decreasing family of $\sigma -$algebras, i.e.  $\mathcal{F}_t \subseteq \mathcal{F}_s$ for any $ 0 \leq t \leq s \leq T$, with completeness and right continuity.    Moreover, $\{W_k(t); t \in[0, T]\}$, $ k \in \mathbb{N}$ is a  countable family of independent $3$-dimensional Brownian motions defined on $(\Omega, \mathcal{A},\mathcal{F},  \mathbb{P})$.
% which is a non-decreasing family of $\sigma -$algebras, i.e.  $\mathcal{F}_t \subseteq \mathcal{F}_s$ for any $ 0 \leq t \leq s \leq T$.  
The expected (mean) value for any $X$-valued $\mathcal{A}$-measurable function $\mathbb{Y}: \Omega \rightarrow X  $ can be defined as,
$$ \mathbb{E}  \left[ \mathbb{Y}\right] \coloneqq \int_{\Omega} \mathbb{Y} (\omega) \, d \mathbb{P}(\omega), $$
whenever the right-hand side exists as a Bochner integral. 
For a given Banach space $(X, \| \cdot \|_X)$, we also write the following space for $X$-valued random variable,
\begin{eqnarray*}
L^p ( \Omega, X; \mathcal{A}, \mathbb{P}) &\coloneqq& \big\{ \mathbb{X} : \Omega \rightarrow X,   \text{ $\mathcal{A}$-measurable and } \mathbb{E}  \left[ \| \mathbb{X} (\omega) \|_X^p \right] < \infty \big\} \, , \\
L^\infty ( \Omega, X; \mathcal{A}, \mathbb{P}) &\coloneqq& \big\{ \mathbb{X} : \Omega \rightarrow X,   \text{ $\mathcal{A}$-measurable and } \text{ess-sup}_{\omega} \left[ \| \mathbb{X} (\omega) \|_X \right] < \infty \big\}\,. 
\end{eqnarray*}

Moreover, given the seperable Hilbert space $L^2_{\text{div}} (D) $, we may consider a family of orthornomal basis over it, $ \{  e_k \}$, i.e. $(e_{k} , e_{l}  ) =  \delta_{kl}$.   With these notations in hand, we can now consider the following cylindrical Wiener process on $L^2_{\text{div}} (D) $ as,
$$ w (t; \omega) :=  \sum\limits_{k} e_k (x) \, W_k(t; \omega),$$
on the stochastic basis $(\Omega, \mathcal{A},\mathcal{F},  \mathbb{P})$.  
\subsection{Equations and boundary conditions}
 Consider the stochastically forced Navier-Stokes equations (\ref{SNSE}) with the periodic boundary condition, 
\begin{equation} \label{BC}
u(t, x+\ell e_j; \omega)=u(t, x;\omega)\hspace{10pt}\mbox{for any }\hspace{10pt} j=1,2,3, \,  x \in \partial D \,,  \text{  a.e. } \mathbb{P}. 
\end{equation}
We denote the stochastic forcing field $S$ as follows, 
\[
d S =  f dt + g(t) \, dw (t; \omega)  :=  f(x) dt + \sum\limits_{k} g_k(t) \, e_k (x) \, dW_k(t; \omega) \,,
\]
where  $f \in H^1_{\text{per}} (D)  $ is the deterministic part of the force,  and the stochastic noise process is given by a family of functions   $ \{  g_k \} \in C^0([0,\infty))  $ such that $\sum\limits_{k}  |g_k(t) |^2 < \infty$ for all $t \in (0,\infty)$. 
Therefore, in here, $g(t)$ is now a Hilbert–Schmidt operator given as,
\[
g(t) (\cdot ):= \sum\limits_{k} g_k(t) \, e_k (x) (e_k, \cdot ) \,.
\]
The stochastic process $S(t,\omega)$ can now be checked to be with mean $ t f(x) $ and a trace-class co-variance operator (denoted by $g^* g (t)$) defined as follows,
$$(g^* g (t) u , v) \coloneqq  \sum\limits_{k} |g_k(t)|^2 (e_k, u) (e_k, v), \text{ for all } u,v \in  L^2(D)  ,$$
$$\text{Tr} (g^* g (t)) \coloneqq  \sum\limits_{k}  |g_k(t) |^2 < \infty.$$
 We restrict our attention to smooth data $u_0(x)$ and $f(x)$ such that they are $\ell$ -periodic and divergence free. Here, $u_0$ is the initial condition such that  $\mathbb{E} [\|u_0\|^2] < \infty $. In addition, we consider mean-zero body forces and initial conditions so the velocity remains mean-zero for all $t>0$, i.e.
\begin{equation} \label{DataCond}
\begin{split}
&\hspace{5pt}\grad \cdot f= 0 \hspace{12pt}\mbox{and  }\hspace{12pt} \grad \cdot u_0=0, \\
& \int_{\Omega} \chi \,dx =0 \hspace{10pt} \mbox{for any }  \hspace{10pt}\chi= u, u_0, f, p.
\end{split}
\end{equation}

\begin{definition}\textbf{(Mean Value of the Dissipation Rate)}
 We will consider time-averaged quantity with the following notation,
$$\langle\psi\rangle \coloneqq \limsup\limits_{T\rightarrow\infty}  \, \frac{1}{T} \int_{0}^{T} \psi(t)\, dt,$$
whenever the integral exists for all $t$.   The time-averaged energy dissipation rate for  (\ref{SNSE}), which  includes dissipation due to the viscous forces,  is now a stochastic process given by,
$$\langle\varepsilon \rangle (\omega)  =  \langle\varepsilon (u (\cdot, \cdot, \omega ))\rangle   \coloneqq  \limsup\limits_{T\rightarrow\infty}  \frac{1}{|D|}\, \frac{1}{T} \int_{0}^{T}   \nu\|\nabla u(t ,\cdot, \omega)\|^2  \, dt. $$
We call   
$$\mathbb{E} [\langle\varepsilon \rangle]  \coloneqq \int_{\Omega}  \langle  \varepsilon \rangle d \mathbb{P}, $$ 
the expected value (mean value) of the dissipation rate, which will be proven to be well-defined in Corollary \ref{Cor1}.

\end{definition}

\begin{definition}\label{Scales}
With $|D|$ the volume of the flow domain, we define,

$$F \coloneqq  \langle\frac{1}{|D|  }\|f\|^2\rangle^{\frac{1}{2}}, \hspace{1.5cm}
 G  \coloneqq   \langle \frac{1}{|D|  } \sum\limits_{k} |g_k|^2 \rangle^{\frac{1}{2}}, \hspace{1.5cm}
U \coloneqq \mathbb{E}\left[ \langle\frac{1}{|D|} \|u\|^2\rangle^{\frac{1}{2}}\right],$$
and,
$$L \coloneqq \begin{cases} \min \left\{\ell , \, \frac{F}{\langle\frac{1}{|D|} \|\grad f\|^2\rangle^{\frac{1}{2}}},\, \frac{F}{\|\grad f\|_{L^{\infty}(0,T;L^{\infty}(D))}}\right\} & \text{ if } F > 0 \\ \ell & \text{ if } F = 0 \end{cases}\, .$$

\end{definition}
We will refer to $F$  as the amplitude of the deterministic force, and  $G^2$ is the \textit{total energy rate supplied by the random force}, which  has a unit of $\frac{\mbox{(velocity)}^2}{\mbox{time}}$ , we only consider the case when $G < \infty$.   With $U$ and $L$ being the large scale velocity and length,  the Reynolds number is, $$\Rey=\frac{U L}{\nu}.$$
\begin{re}
We will show in Corollary \ref{Cor1} that $\langle\frac{1}{|D|} \|u\|^2\rangle $ exists in $ L^1 ( \Omega ; \mathbb{R} ;  \mathcal{A},  \mathbb{P}) $.
\end{re}

\subsection{Martingale Solution}

Martingale solutions to (\ref{SNSE}), which are probabilistically weak analogues to the Leray-Hopf weak solutions to the deterministic
Navier–Stokes equations., were introduced first in \cite{FG95}.   Probabilistically weak means that the noise is part of the solution, along with  stochastic process $u(x, t; \omega)$ that satisfies (\ref{SNSE}) in the sense of distributions. A martingale solution can be defined as follow.

\begin{definition}  \label{0toT}\textbf{
(Martingale Solution)} A martingale solution to (\ref{SNSE}) on $[0, T]$ consists of a  stochastic basis $(\Omega, \mathcal{A},\mathcal{F},  \mathbb{P})$, with a cylindrical Wiener process $w$ over the basis,  and a progressively measurable process $u: [0,T] \times \Omega \rightarrow L^2_{\text{div}}(D)$, with $\mathbb{P}$-a.e. paths,
\begin{equation}
 u(\cdot, \cdot; \omega) \in \mathcal{C} ([0,T], D(A^{-\alpha})) \bigcap L^{\infty} ([0,T], L^2_{\text{div}}(D)  ) \bigcap L^{2} ([0,T], H^1_{\text{per}, \text{div}}(D) ),
\end{equation}
for some $\alpha < 0 $, such that, for all $t \in [0,T]$ and $\varphi \in D(A^{\alpha}) $, the following holds $\mathbb{P}$-almost surely,
 \begin{equation}\label{weak_weak}
 \begin{split}
 & \left( u(t,\cdot;\omega) , \varphi\right)
   + \int_0^t  \left( u (t,\cdot;\omega) \cdot \nabla u (t,\cdot;\omega) , \varphi \right)  dt  +  \nu \int_0^t \left( \nabla u (t,\cdot;\omega) , \nabla \varphi \right) dt   \noindent \\ 
&  = \left( u(0,\cdot;\omega) , \varphi\right) + t \left( f , \varphi \right) +  \sum\limits_{k} \int_0^t g_k(t)  \left ( e_k , \varphi \right) \, dW_k(t; \omega),
 \end{split}
 \end{equation}
and moreover $u$ satisfying the following energy inequality for every $t > s$,
  \begin{equation}\label{EnergyEq2a}
  \mathbb{E} \left[ \|u(t)\|^2 \right] + 2    \mathbb{E} \left[ \int_s^t \nu \|\nabla u(r)\|^2 dr \right] \leq    \mathbb{E} \left[ \|u(s)\|^2 \right] + \int_s^t \text{Tr} (g^* g (r)) dr  +  2  \mathbb{E} \left[ \int_s^t  (f , u(r)) dr  \right]   \,.
  \end{equation}

\end{definition}

\begin{re}\label{FutureWork}
The existence of martingale solution with Dirichlet boundary condition is shown in \cite{FG95} under very general hypotheses on the diffusion term which is independent of $t$, with the choice of filtration $\mathcal{F} = \{ \mathcal{F}_{t}; t \in [0,T] \}$ as,
\begin{equation}
\mathcal{F}_{t} = \sigma \{ u(s) ; s \leq t\} \,,
\end{equation}
the history of $u(s)$ for $s \leq t$.
Moreover, for $\beta > \max\{ \frac{3}{2} , \alpha \}$, we have that $ A^{- \beta/2} M(t, \cdot; \omega )$ is a square integrable martingale with respect to the above choice of filtration, where, 
 \begin{eqnarray} \label{martingale}
 M(t, \cdot; \omega ) := u(t,\cdot;\omega) -  u(0,\cdot;\omega) 
   + \int_0^t  u (t,\cdot;\omega) \cdot \nabla u (t,\cdot;\omega)  dt  -  \nu \int_0^t \Delta u (t,\cdot;\omega) dt  - t  f ,
\end{eqnarray}
for $\mathbb{P}$-almost surely pathwise in $C([0,t]; D(A^{- \beta/2}))$.
The existence argument in Theorem 3.1 in \cite{FG95}, with the Galerkin approximation, as well as compact embedding (Theorem 2.2 in \cite{FG95}), weak convergences, Skorohod embedding theorem and martingale representation theorem, follows identically in our case when $g$ is $t$-dependent but smooth and periodic boundary condition is employed instead of Dirichlet boundary condition (e.g. Theorem 2.2 in \cite{FG95} is valid with our choice of spaces). 

\end{re}

\begin{definition}  \label{0toinfty}
We call a progressively measurable function $ u : [0,T] \times \Omega \rightarrow L^2_{\text{div}}(D)$ w.r.t. $ \mathcal{F}$ a  martingale solution to \eqref{SNSE} on $[0,\infty)$ if $u(\cdot,x ; \omega) \mid_{[0,T]}$ is a  martingale solution to \eqref{SNSE} on $[0,T]$ for all $T >0$.
\end{definition}

%Similarly, for a stochastic process $\mathbb{X}:[0,\infty) \times \Omega \rightarrow \mathbb{R}$ sitting inside the space $ \bigcap_{T > 0 }\mathbb{L}^p ([0,T], \Omega,  \mathbb{R} ; \mathcal{A},\mathcal{F},  \mathbb{P})$ for some $p\geq 1$, we consider time-averaged quantity with the following notation $$\langle \mathbb{X} (\cdot, \omega)\rangle \coloneqq \limsup\limits_{T\rightarrow\infty}  \, \frac{1}{T} \int_{0}^{T}  \mathbb{X} (t, \omega) \, dt.$$ whenever the right hand side exists in $ L^q ( \Omega ; X ;  \mathcal{A},  \mathbb{P}) $ for some $q \geq 1$. For instant, one can consider $ \varepsilon  =  \frac{1}{|D|} \nu\|\nabla u (t,\cdot,\omega )\|^2  \in \bigcap_{T > 0 }\mathbb{L}^1 ([0,T], \Omega,  \mathbb{R} ; \mathcal{A},\mathcal{F},  \mathbb{P}) $ (as we will check later), where $|D|$ is the volume of the flow domain. 

\section{UPPER BOUNDS ON  DISSIPATION RATE}\label{Section3}

In this section,  we will derive the basic relationships used to establish the fundamental results, and then prove the key theorems providing the upper bounds on  $ \mathbb{E} [\langle\varepsilon \rangle]$   in terms of $U, L, G$ and $\Rey$. To a great extent,  the analysis for the stochastic NSE (\ref{SNSE}) will be considered a refinement of the approach in \cite{DF02} with careful treatment of the white noise term.

First  consider (weak) martingale solutions to (\ref{SNSE}) on $[0,\infty)$  given in Definition \ref{0toinfty}.  
Following \cite{BT73}, we consider the process $ z:= u - \Upsilon$, where  $d \Upsilon  = g(t) \, d w (t : \omega)$, with $w$ being a cylindrical Wiener process on $L^2_{\text{div}} (D) $.  We quickly notice that $z$ satisfies
\begin{equation*} 
d z=  \left( - u \cdot \nabla u + \nu \Delta u - \nabla p +  f(x) \right) dt 
\end{equation*}
which shows that $z$ is now an absolute continuous process. Moreover, following Corollary 3.1 (which comes as a direct consequence of Theorem 3.2  in \cite{BT73}), we have
\begin{equation*} 
\partial_t z \in L^2([0,T] , D( A^{-\alpha})) \text{ a.e. } \mathbb{P} \,,
\end{equation*}
and furthermore, the following energy inequality holds a.e. $\mathbb{P}$ (upon an integration by part of (3.13) in \cite{BT73} with respect to $t$ and applying the Ito's product formula), 
  \begin{equation}\label{EnergyEq2}
  \begin{split}
  \|u(T)\|^2 + 2  \int_0^T \nu \|\nabla u(t)\|^2 dt \leq  \|u_0\|^2  + \int_0^T \text{Tr} (g^* g (t)) dt &+  2 \int_0^T  (f(x), u(t)) dt \\
  &+ 2 \sum\limits_{k} \int_0^T g_k(t) (e_k, u(t)) dW_k(t).
  \end{split}
  \end{equation}
The above integral inequality can be formally short-handed  as the following stochastic differential inequation:
 \begin{equation}\label{EnergyIneq1}
 d\|u(t)\|^2 + 2 \nu \|\nabla u (t) \|^2 dt \leq \text{Tr} (g^* g (t)) dt + 2 (f(x), u(t)) dt + 2  \sum\limits_{k} g_k(t) (e_k, u(t)) dW_k(t).
  \end{equation}
 The above  inequality makes sense with the right hand side a $\mathbb{P}$-almost surely non-negative.  The above inequalities  (\ref{EnergyEq2}) and (\ref{EnergyIneq1}) are the starting points for the calculation of the upper bound. To move forward rigorously, we first   prove in Proposition \ref{KEbound} the  boundedness of the kinetic energy, and then  in  Corollary \ref{Cor1}  that $U$ given in Definition \ref{Scales}  and  $\mathbb{E} [\langle\varepsilon \rangle] $ are well-defined.  
\begin{prop}\label{KEbound} The mean value of the kinetic energy of a mean-zero martingale solution to (\ref{SNSE}) on $[0,\infty)$ is uniformly bounded in time, 
$$
 \sup_{t} \mathbb{E} [\|u(t)\|^2]\leq \ C (\mbox{data}) < \infty.
$$
\end{prop}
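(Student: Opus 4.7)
The plan is to start from the expected-value energy inequality (\ref{EnergyEq2a}) and convert it into a Gronwall-type inequality for $y(t) := \mathbb{E}[\|u(t)\|^2]$. Applying (\ref{EnergyEq2a}) with $s = 0$ and using that $u$ is mean-zero and divergence-free almost surely (cf.\ (\ref{DataCond})), the Poincaré inequality on $\mathbb{T}^3$ yields $\|u\|^2 \leq \lambda_1^{-1}\|\nabla u\|^2$, so the viscous dissipation absorbs a multiple of $y$:
$$
2\nu\, \mathbb{E}\! \int_0^t \|\nabla u(r)\|^2\, dr \;\geq\; 2\nu\lambda_1 \int_0^t y(r)\, dr.
$$
The deterministic force term is handled by Cauchy--Schwarz and Young's inequality,
$$
2\,\mathbb{E}[(f, u(r))] \;\leq\; \nu\lambda_1\, y(r) + \frac{\|f\|^2}{\nu\lambda_1}.
$$
Substituting both estimates into (\ref{EnergyEq2a}) produces the integral inequality
$$
y(t) + \nu\lambda_1 \int_0^t y(r)\, dr \;\leq\; y(0) + \int_0^t \text{Tr}(g^* g(r))\, dr + \frac{t\,\|f\|^2}{\nu\lambda_1}.
$$

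From here I would invoke a Gronwall-type comparison for integral inequalities of this form, equivalently upgrading the estimate to a right Dini-derivative bound $y'(t) + \nu\lambda_1\, y(t) \leq \text{Tr}(g^*g(t)) + \|f\|^2/(\nu\lambda_1)$ valid at almost every $t$, and then applying the integrating factor $e^{\nu\lambda_1 t}$ to obtain
$$
y(t) \;\leq\; e^{-\nu\lambda_1 t}\, y(0) \;+\; \int_0^t e^{-\nu\lambda_1(t - r)} \Big[\text{Tr}(g^* g(r)) + \tfrac{\|f\|^2}{\nu\lambda_1}\Big] dr.
$$
Under the (expected) data hypothesis that $\sup_r \text{Tr}(g^*g(r)) \leq M < \infty$, the right-hand side is uniformly bounded in $t$ by $\mathbb{E}[\|u_0\|^2] + (M + \|f\|^2/(\nu\lambda_1))/(\nu\lambda_1)$, which is precisely a bound of the form $C(\mbox{data})$.

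The main obstacle is the Gronwall step itself: the energy inequality (\ref{EnergyEq2a}) is only integrated in time, and a priori $y(t) = \mathbb{E}[\|u(t)\|^2]$ is merely measurable, so passing from an integral inequality with a built-in dissipative term to a uniform pointwise-in-$t$ bound requires a careful justification through the Dini-derivative form, an ODE comparison, or a direct integrating-factor manipulation of the integral inequality. A secondary but worth-noting point is that Definition \ref{Scales} only controls $\text{Tr}(g^*g)$ in the time-averaged sense via $G$, so the genuine pointwise uniform bound on $\text{Tr}(g^*g(t))$ required for $\sup_t y(t) < \infty$ must be read into the data hypothesis implicit in the constant $C(\mbox{data})$.
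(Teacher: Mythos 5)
Your strategy is at bottom the same as the paper's: absorb the deterministic forcing by Cauchy--Schwarz/Young, use Poincar\'e to make the viscous term coercive in $\|u\|^2$, and then run an integrating-factor argument with weight $e^{\nu\lambda_1 t}$. The difference is one of execution: you work with the deterministic function $y(t)=\mathbb{E}[\|u(t)\|^2]$ and the already-averaged inequality (\ref{EnergyEq2a}), whereas the paper first verifies via It\^o's isometry that the noise integral in (\ref{EnergyEq2}) is a square-integrable martingale, takes conditional expectation with respect to $\mathcal{F}_s$ for \emph{arbitrary} $s$, applies It\^o's formula to $\mathbb{Y}(t)=e^{C_D^{-1}\nu t}\|u(t)\|^2$, and concludes with the submartingale convergence theorem. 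Your route is more elementary in that, once (\ref{EnergyEq2a}) is granted, no stochastic calculus is needed at all.

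The step you flag but do not close is a genuine gap \emph{as you have set it up}: from the single integral inequality at $s=0$, i.e.\ $y(t)+\nu\lambda_1\int_0^t y(r)\,dr\le y(0)+Mt$, one cannot conclude $\sup_t y(t)<\infty$ (a nonnegative, locally integrable $y$ with tall, thin spikes satisfies such an inequality while being unbounded), and one is not entitled to differentiate an inequality to reach the Dini-derivative form you propose. The repair is already contained in the definition you are quoting: (\ref{EnergyEq2a}) is asserted for \emph{every} pair $s<t$, so you in fact have the two-parameter family $y(t)+\nu\lambda_1\int_s^t y(r)\,dr\le y(s)+M(t-s)$, equivalently that $s\mapsto y(s)-Ms+\nu\lambda_1\int_0^s y(r)\,dr$ is non-increasing; this is what licenses a uniform-Gronwall/comparison argument yielding $y(t)\le y(0)e^{-\nu\lambda_1 t}+M/(\nu\lambda_1)$, and it is exactly the information the paper extracts by conditioning on $\mathcal{F}_s$ at arbitrary $s$ before introducing the exponential weight. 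Your secondary remark is also fair and applies to the paper's own proof: both arguments need $\int_s^t\mathrm{Tr}(g^*g(r))\,dr\lesssim(t-s)$, i.e.\ a uniform-in-time control of $\sum_k|g_k(t)|^2$ rather than only the time-averaged quantity $G$ of Definition \ref{Scales}; the paper reads this into the standing hypotheses $g_k\in C^0$, $\sum_k|g_k(t)|^2<\infty$ and the qualifier ``for large enough $t$ and $s$'' without further comment.
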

\begin{proof}
From Definitions \ref{0toT} and \ref{0toinfty}, a martingale solution to (\ref{SNSE}) on $[0,\infty)$ 
is in   $L^2 ( \Omega, L^{2} ([0,t], H^1_{\text{per}, \text{div}}(D) ) ; \mathcal{A}, \mathbb{P})$ for all $t > 0$. (Notice that one may obtain this conclusion either directly from \eqref{EnergyEq2a}, or from the weak lower semicontinuity of norm together with Fatou's Lemma and the inequality before (12) on P.377 of \cite{FG95}, that
\[
\mathbb{E} \left[\int_0^t\|u(s)\|^2 ds \right] \leq \liminf_{n\rightarrow \infty} \mathbb{E} \left[\int_0^t\|u_n^1(s)\|^2 ds \right] \leq C_2 <  \infty \,.
\]
Therefore, we have $\mathbb{E} [\int_0^t\|u(s)\|^2 ds ] < \infty$ for all $t >0$. 
With the given fact that $\sum\limits_{k}  |g_k(t)|^2 < \infty$ for all $t \in (0,\infty)$, together with the independence of $W_k$, orthogornality of $\{e_k\}_k $ in $L^2_{\text{div}} (D) $,  and It\^{o}'s Isometry \cite{E13}  we have,  for each $t > 0$,
\begin{eqnarray*}
\mathbb{E} \left[  \left( \sum_k \int_0^t g_k(s) (e_k , u(s)) \, dW_k(s) \right)^2 \right] &= & \mathbb{E} \left( \sum_k \int_0^t |g_k(s)|^2  |(e_k , u(s)) |^2 ds \right) \\
% & \leq&    \mathbb{E} \left(  \int_0^t  \sup_k  \{ | g_k(s) |^2 \}    \sum_k \left|\left( e_k , u(s)\right) \right|^2 ds \right)  \\
& \leq&    \mathbb{E} \left(  \int_0^t  \sup_k  \{ | g_k(s) |^2 \} \|u(s) \|^2 ds \right)  \\
& \leq&    \sup_{s \in [0,t] }  \left \{ \sum\limits_{k}  |g_k(t)|^2  \right\}  \, \mathbb{E} \left(  \int_0^t  \|u(s) \|^2 ds \right)  <   \infty. 
\end{eqnarray*}
%Therefore,  for all $T > 0$, 
%
% $$\sum_k \int_0^t (g_k(s), u(s)) \, dW_k(s)  \in \mathbb{L}^2 ([0,T], \Omega,   \mathbb{V}; \mathcal{A},\mathcal{F},  %\mathbb{P}) \,. $$
Hence, we realize (with our simple and specific choice of $g$) that $M(t, \cdot; \omega )$ in \eqref{martingale} is square integrable martingale (without the necessity of the application of $A^{- \beta/2}$), and therefore with the standard property of It\^{o} integral  \cite{E13}, we have, 
\begin{equation}
\mathbb{E} \left[ \sum_k \int_0^T g_k(t) (e_k , u(t)) \, dW_k(t)\right] = 0. 
\end{equation}
Now taking conditional expectation of the inequality (\ref{EnergyEq2}) with respect to $\mathcal{F}_s$, and using Poincar\'{e} inequalities twice, we have, 
\begin{eqnarray*}
  \mathbb{E} [ \|u(t)\|^2 |  \mathcal{F}_s  ] & \leq & \|u(s)\|^2 + \left(  2 G^2 + \frac{ F^2}{C_{D} \nu } \right) |D| \, (t-s)  -     \mathbb{E} \left[\int_s^t \nu \| \nabla u(r)\|^2 dr |  \mathcal{F}_s  \right]  \\
& \leq & \|u(s)\|^2 + \left(  2 G^2 + \frac{ F^2}{C_{D} \nu } \right) |D| \, (t-s)  -  \mathbb{E} \left[  \frac{1}{C_{D} }  \int_s^t \nu \| u(r)\|^2 dr |  \mathcal{F}_s  \right] , 
\end{eqnarray*}
for large enough $t$ and $s$, where $C_{D}$ is a constant depending on  $D$.  Now considering $\mathbb{Y} (t) = \exp( C_{D}^{-1} \nu t  ) \| u(t)\|^2$,  and then using  It\^{o}'s formula  gives, 
\[
  \mathbb{E} [ \mathbb{Y} (t) | \mathcal{F}_s ]  \leq \mathbb{Y} (s) +  \left[ \exp( C_{D}^{-1} \nu t  )  -  \exp( C_{D}^{-1} \nu s  ) \right] \left(  2  \nu^{-1} G^2 C_{D}  + \nu^{-1} F^2 \right) |D| \,.
\]
for large enough $t$ and $s$. We hence have by submartingale convergence theorem \cite{E13} that $\mathbb{Y} (t)$ converges and,
\[
 \sup_{ 0 < t<T} \mathbb{E} [\mathbb{Y} (t) ] < C\exp( C_{\Omega}^{-1} \nu T  )  \,.
\]
Therefore, we proved the proposition.
\end{proof}

\begin{cor} \label{Cor1}
For a given martingale solution to (\ref{SNSE}) on $[0,\infty)$, we have $\langle \varepsilon \rangle$ and $  \langle\frac{1}{|D|} \|u\|^2\rangle $ both exist in $ L^1 ( \Omega , \mathbb{R} ;  \mathcal{A},  \mathbb{P}) $.
\end{cor}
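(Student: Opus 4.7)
The plan is to use Fubini together with Proposition \ref{KEbound} and the energy inequality (\ref{EnergyEq2}) to obtain uniform-in-$T$ bounds on $\mathbb{E}[\tfrac{1}{T}\int_0^T \|u\|^2\,dt]$ and $\mathbb{E}[\tfrac{\nu}{T}\int_0^T \|\nabla u\|^2\,dt]$, and then to pass to the limsup via Fatou-type reasoning.

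For the kinetic energy quantity, Fubini and Proposition \ref{KEbound} immediately give
\[
\mathbb{E}\Bigl[\tfrac{1}{|D|T}\int_0^T \|u(t)\|^2\,dt\Bigr] \;=\; \tfrac{1}{|D|T}\int_0^T \mathbb{E}[\|u(t)\|^2]\,dt \;\leq\; \tfrac{C}{|D|}
\]
uniformly in $T$. For the dissipation quantity I would take expectations in the energy inequality (\ref{EnergyEq2}). The stochastic integral is a genuine square-integrable martingale with zero expectation, as was already verified inside the proof of Proposition \ref{KEbound} via It\^o's isometry together with the uniform-in-time bound on $\mathbb{E}[\|u(t)\|^2]$. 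Dropping the non-negative $\mathbb{E}[\|u(T)\|^2]$ and controlling the drift via Cauchy--Schwarz and Proposition \ref{KEbound} (that is, $\mathbb{E}[(f,u(t))] \leq \|f\|\sqrt{\mathbb{E}[\|u(t)\|^2]} \leq \|f\|\sqrt{C}$), then dividing by $T$, yields a uniform-in-$T$ bound on $\mathbb{E}\bigl[\tfrac{\nu}{|D|T}\int_0^T \|\nabla u\|^2\,dt\bigr]$.

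Finally, to convert these uniform $L^1$-bounds on the Cesàro averages $X_T$ (for either $X_T = \tfrac{1}{|D|T}\int_0^T \|u\|^2\,dt$ or $X_T = \tfrac{\nu}{|D|T}\int_0^T \|\nabla u\|^2\,dt$) into $L^1$-membership of $\limsup_T X_T$, I would exploit the elementary Cesàro control $\sup_{T\in[T_0,2T_0]} X_T \leq 2X_{2T_0}$, which follows at once from the monotonicity in $T$ of $\int_0^T \psi\,dt$ for $\psi\geq 0$. Combined with the monotone convergence theorem applied to the decreasing family $\sup_{T\geq T_0}X_T$, this identifies $\mathbb{E}[\limsup_T X_T]$ as finite, yielding the claimed $L^1$-membership of both $\langle\varepsilon\rangle$ and $\langle \tfrac{1}{|D|}\|u\|^2\rangle$.

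The principal obstacle is precisely this last step. A uniform-in-$T$ bound on $\mathbb{E}[X_T]$ alone does not in general imply $\mathbb{E}[\limsup_T X_T] < \infty$, and Fatou's lemma controls only the liminf. One must genuinely exploit the non-oscillatory Cesàro-average structure of $X_T$; if even that falls short at the $L^1$-level, a higher-moment estimate obtained by applying the Burkholder--Davis--Gundy inequality to the stochastic integral in (\ref{EnergyEq2}) supplies control on $\mathbb{E}[\sup_{0\leq t\leq T}\|u(t)\|^{2p}]$ for some $p>1$, which upgrades Chebyshev-style bounds on the dyadic sequence $\{X_{2^k T_0}\}_k$ to a summable tail estimate and closes the argument.
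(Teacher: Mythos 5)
Your uniform-in-$T$ bounds on $\mathbb{E}[X_T]$ are correct and unproblematic, but you have rightly identified that the whole difficulty lies in the passage to $\limsup_T X_T$, and neither of your proposed remedies closes that gap. The doubling inequality $\sup_{T\in[T_0,2T_0]}X_T\le 2X_{2T_0}$ is true, but it only reduces the problem to the dyadic sequence $\{X_{2^kT_0}\}_k$, and a countable supremum of non-negative random variables with uniformly bounded means need not be integrable; hence $\sup_{T\ge T_0}X_T$ may fail to lie in $L^1$ for every $T_0$ and the monotone convergence argument for the decreasing family never gets off the ground. The Ces\`aro structure genuinely does not rescue this: one can build a non-negative process whose Ces\`aro averages have means bounded by an absolute constant while $\limsup_TX_T$ is non-integrable, by assigning to the outcome $n$ (of probability $2^{-n}$) recurrent bursts of height $2^n$ supported on sparse time windows scheduled disjointly across the different outcomes. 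Your fallback fares no better under the hypotheses of the corollary: a $k$-uniform $L^p$ bound ($p>1$) along the dyadic sequence gives only $\mathbb{P}(X_{2^kT_0}>\lambda)\le C\lambda^{-p}$, which is not summable in $k$ for fixed $\lambda$, so Borel--Cantelli does not apply; and a $T$-uniform higher-moment bound is not actually available here --- the paper only controls $\mathbb{E}[\langle\varepsilon\rangle^2]$ in Section \ref{Section5} under the additional Assumptions \textbf{(B)} and \textbf{(C)}, precisely because the requisite moment estimates do not come for free from Definition \ref{0toT}.

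The paper's proof rests instead on producing a single $T$-independent integrable majorant, which is the ingredient your argument is missing. One bounds
\[
\frac{1}{|D|T}\int_0^T\|u(t)\|^2\,dt\;\le\;\frac{1}{|D|}\sup_{t\ge 0}\|u(t)\|^2,
\]
and, starting from \eqref{EnergyEq2}, bounds $\frac{2}{T}\int_0^T\nu\|\nabla u\|^2\,dt$ pathwise by a constant plus a multiple of $\sup_{t\ge 0}\|u(t)\|^2$; the delicate point there is the stochastic integral, which must be controlled $\mathbb{P}$-a.s.\ rather than merely in expectation, and the paper does this (for large $T$) via It\^o's isometry tested against arbitrary indicator functions, concluding that $\bigl|\frac{2}{T}\sum_k\int_0^Tg_k(e_k,u)\,dW_k\bigr|\le 2+2\sup_{t\ge0}\|u(t)\|^2$ almost surely, leading to the pointwise bound \eqref{amazing}. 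Integrability of the majorant $\sup_{t\ge0}\|u(t)\|^2$ is then asserted from Proposition \ref{KEbound} via a Doob-type maximal inequality, and once a fixed integrable dominating random variable is in hand, integrability of the $\limsup$ is immediate. In short, what is needed is a maximal estimate $\mathbb{E}[\sup_{t\ge0}\|u(t)\|^2]<\infty$ --- a statement about the paths, uniform in time --- and no amount of refinement of the bounds on the means $\mathbb{E}[X_T]$ substitutes for it.
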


\begin{proof}
We first show $  \langle\frac{1}{|D|} \|u\|^2\rangle \in L^1 ( \Omega , \mathbb{R} ;  \mathcal{A},  \mathbb{P}) $.
We notice that
\[
 0 \leq  \frac{1}{|D|}\, \frac{1}{T} \int_{0}^{T}   \nu\| u(t,\cdot, \omega)\|^2  \, dt \leq    \frac{1}{|D|} \sup_{0  \leq t }  [\|u(t, \cdot, \omega)\|^2],
\]
where the latter sits in $L^1( \Omega, \mathbb{R} ; \mathcal{F}, \mathbb{P} )$ after applying Doob's inequality on $\| u(t,\cdot, \omega)\|$:
\[
   \mathbb{E}  \left( \frac{1}{|D|} \sup_{0  \leq t}  [\|u(t, \cdot, \omega)\|^2] \right) \leq C   \left( \frac{1}{|D|} \sup_{0  \leq t } \mathbb{E} [\|u(t, \cdot, \omega)\|^2] \right) < \infty
\]
in the proof of Proposition \ref{KEbound}.
Hence we have by dominated convergence theorem that,  
\begin{eqnarray*}
 \langle\frac{1}{|D|} \|u\|^2\rangle (\omega) \coloneqq  \limsup_{T \rightarrow \infty} \frac{1}{|D|}\, \frac{1}{T} \int_{0}^{T}   \| u(t,\cdot, \omega)\|^2  \, dt,  
\end{eqnarray*}
exists in $L^1 ( \Omega , \mathbb{R} ;  \mathcal{A},  \mathbb{P})$ and 
\begin{eqnarray*}
U  =  \limsup_{T \rightarrow \infty} \mathbb{E} \left[ \frac{1}{|D|}\, \frac{1}{T} \int_{0}^{T}   \| u(t,\cdot, \omega)\|^2  \, dt  \right] <  \infty. 
\end{eqnarray*}
We next show  $ \langle \varepsilon \rangle \in L^1 ( \Omega , \mathbb{R} ;  \mathcal{A},  \mathbb{P}) $.
In fact, from \eqref{EnergyEq2} and Cauchy inequality, for large enough $T$, we have,
  \begin{eqnarray*}
   \frac{2}{T}  \int_0^T \nu \|\nabla u(t)\|^2 dt \leq 10 \sup_{0  \leq t }  \|u(t, \cdot, \omega)\|^2  +   2 |D|  G^2 +   F^2 +   \frac{2}{T}  \sum\limits_{k} \int_0^T g_k(t) (e_k, u(t,\cdot;\omega)) dW_k(t; \omega), 
  \end{eqnarray*}
 $\mathbb{P}$-a.e., where we keep absorbing constants.
 For large enough $T$, by It\^{o}'s Isometry \cite{E13} and Jensen's inequality, for any indicator function $\chi_A(\omega) $ where $A$ is $\mathcal{A}$-measurable, 
\begin{equation*}
\begin{split}
& \mathbb{E} \left(  \chi^2_{A} (\omega) \left| \frac{2}{T}  \sum\limits_{k} \int_0^T g_k(t) (e_k, u(t,\cdot;\omega)) dW_k(t,\omega) \right|^2 \right)  \leq \frac{4}{T^2}   \mathbb{E} \left( \sum_k \int_0^T |g_k(s)|^2\,  \chi_{A}^2 (\omega)    |(e_k , u(s,\cdot;\omega)) |^2 ds \right)  \\
& \leq   \frac{4}{T^2}    \mathbb{E} \left( \chi^2_{A} (\omega) \int_0^T \sup_k  \{ | g_k(s) |^2 \} \|u(s,\cdot;\omega) \|^2 ds \right) \leq   \frac{4}{T^2}  \int_0^T \sup_k  \{ | g_k(s) |^2 \}   ds \left(   \mathbb{E} \left( \chi^2_{A} (\omega)   \sup_{0  \leq t }  \|u(t, \cdot, \omega)\|^2 \right)  \right)  \\
& \leq   \frac{4}{T^2}  \int_0^T \sum_k  | g_k(s) |^2  ds \left(   \mathbb{E} \left( \chi^2_{A} (\omega)   \sup_{0  \leq t }  \|u(t, \cdot, \omega)\|^2 \right)  \right) \leq   \frac{8}{T} |D|G^2 \left(   \mathbb{E} \left( \chi^2_{A} (\omega)   \sup_{0  \leq t }  \|u(t, \cdot, \omega)\|^2 \right)  \right) \\
& \leq    \mathbb{E} \left( \chi^2_{A} (\omega)   \sup_{0  \leq t }  \|u(t, \cdot, \omega)\|^2 \right),
\end{split}
\end{equation*}
therefore  for large enough $T>0$, we have
\begin{eqnarray*}
\mathbb{E} \left(  \chi_{A} (\omega) \left( \left| \frac{2}{T}  \sum\limits_{k} \int_0^T g_k(t) (e_k, u(t,\omega)) dW_k(t,\omega) \right|^2 -   \sup_{0  \leq t }  \|u(t, \cdot, \omega)\|^2 \right ) \right) < 0 \,.
  \end{eqnarray*}
Since $A$ is an aubituary measurable set, we have, for large enough $T$, that $\mathbb{P}$-a.e.,
\begin{eqnarray*}
 \left| \frac{2}{T}  \sum\limits_{k} \int_0^T g_k(t) (e_k, u(t,\omega)) dW_k(t,\omega) \right| \leq   \sup_{0  \leq t }  \|u(t, \cdot, \omega)\|  \leq  2 +  2 \sup_{0  \leq t }  \|u(t, \cdot, \omega)\|^2 \,.
  \end{eqnarray*}
Therefore, we now have  $\mathbb{P}$-a.e. that
  \begin{eqnarray} \label{amazing}
   \frac{2}{T}  \int_0^T \nu \|\nabla u(t)\|^2 dt \leq 20 \, \sup_{0  \leq t }  \|u(t, \cdot, \omega)\|^2  +   2 |D|  G^2 +   F^2  + 1,
  \end{eqnarray}
where the latter again sits in $L^1( \Omega, \mathbb{R} ; \mathcal{F}, \mathbb{P} )$ by Doob's inequality on $\| u(t,\cdot, \omega)\|$ from Proposition \ref{KEbound}.
Hence we have by dominated convergence theorem that,
\begin{eqnarray*}
\langle \varepsilon \rangle (\omega) \coloneqq  \limsup_{T \rightarrow \infty} \frac{1}{|D|}\, \frac{1}{T} \int_{0}^{T}   \nu\|\nabla u(t,\cdot, \omega)\|^2  \, dt,  
\end{eqnarray*}
exists in $L^1 ( \Omega , \mathbb{R} ;  \mathcal{A},  \mathbb{P})$. Hence,  
$$
\mathbb{E} [ \langle \varepsilon \rangle ]   =  \limsup_{T \rightarrow \infty} \mathbb{E} \left[  \frac{1}{|D|}\, \frac{1}{T} \int_{0}^{T}   \nu\|\nabla u(t,\cdot, \omega)\|^2  \right] \, dt  <  \infty.
$$
\end{proof}
Now we have all the preliminaries ready to derive the upper bound on $\mathbb{E} [ \langle \varepsilon \rangle ]$. First  notice that,
$$  \left | \frac{ \|u(T,\cdot,\omega)\|^2 - \|u_0\|^2 }{T} \right| \leq \frac{2 }{T} \sup_{0  \leq t }  [\|u(t, \cdot, \omega)\|^2],$$
where again, the right hand side is in  $L^1( \Omega,  \mathcal{F}, \mathbb{P} )$ by Doob's inequality on $\| u(t,\cdot, \omega)\|$, and therefore, 
\begin{equation}\label{1overT}
\mathbb{E} \left[ \frac{ \|u(T,\cdot,\omega)\|^2 - \|u_0\|^2 }{T} \right] =  \mathcal{O} (\frac{1}{T}) \,.
\end{equation}
Averaging (\ref{EnergyEq2}) over $[0, T ]$, applying Cauchy–Schwarz inequality, taking expected value and  using Proposition \ref{KEbound}  yields,
\begin{equation}\label{Eq-3}
\begin{split}
&\mathbb{E} \big[\frac{1}{|D|} \frac{1}{T}  \int_0^T \nu \|\nabla u(t)\|^2 dt \big] \leq \mathcal{O} (\frac{1}{T})+ \frac{1}{2}  \frac{1}{|D|} \frac{1}{T} \int_0^T \text{Tr} (g^* g (t)) dt \\  
&+ (\frac{1}{T} \frac{1}{|D|} \int_0^T \|f\|^2 dt)^{\frac{1}{2}} \, \mathbb{E}  \big[(\frac{1}{T} \frac{1}{|D|} \int_0^T \|u(t)\|^2 dt)^{\frac{1}{2}}\big] \, ,
  \end{split}
\end{equation}
after taking into account that $\sum_k \int_0^t (g_k(s), u(s)) \, dW_k(s) $ is now itself square integrable (as show in the proof of Proposition \ref{KEbound}) and the standard property of It\^{o} integral.  Take the limit superior as $T \rightarrow \infty$, which exists by Corollary \ref{Cor1}, and use the scaled defined in  Definition (\ref{Scales}) to obtain,
\begin{equation}\label{FirstIneq}
\mathbb{E} [\langle\varepsilon \rangle] \leq \frac{1}{2}  G^2  + F U.
\end{equation}
Hence, we proved the following lemma, 
\begin{lemma} For a given mean-zero  martingale solution to (\ref{SNSE}) on $[0,\infty)$, the following inequality holds, 
\begin{equation}\label{FirstIneq_combined}
 \mathbb{E} [\langle\varepsilon \rangle ] \leq \frac{1}{2}  G^2  + F U.
\end{equation}
\end{lemma}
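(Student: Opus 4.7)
The plan is to use the pathwise energy inequality \eqref{EnergyEq2} as the starting point, divide through by $2T|D|$, take expectations, and then send $T\to\infty$. Three ingredients are needed: the initial/terminal boundary term must decay, the stochastic integral must vanish in expectation, and the forcing term $(f,u)$ must be controlled by Cauchy--Schwarz in a way that produces the product $FU$ after taking $\limsup$.

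First I would average \eqref{EnergyEq2} over $[0,T]$ and divide by $|D|$ to obtain, pathwise,
\begin{equation*}
\frac{\|u(T)\|^2 - \|u_0\|^2}{T|D|} + \frac{2}{T|D|}\int_0^T \nu \|\nabla u(t)\|^2\,dt \leq \frac{1}{T|D|}\int_0^T \mathrm{Tr}(g^*g(t))\,dt + \frac{2}{T|D|}\int_0^T (f,u(t))\,dt + \frac{2}{T|D|}\sum_k \int_0^T g_k(t)(e_k,u(t))\,dW_k(t).
\end{equation*}
Next I would take expectations. The Itô term vanishes: the computation in the proof of Proposition \ref{KEbound} shows that $\sum_k \int_0^t g_k(s)(e_k,u(s))\,dW_k(s)$ is a square-integrable martingale, so its expectation is zero. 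The boundary contribution is handled by \eqref{1overT}, which bounds $\mathbb{E}[(\|u(T)\|^2-\|u_0\|^2)/T]$ by $\mathcal{O}(1/T)$ using Doob's inequality and Proposition \ref{KEbound}.

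For the deterministic forcing term, I would apply Cauchy--Schwarz in space and then in time, yielding
\begin{equation*}
\mathbb{E}\Bigl[\frac{1}{T|D|}\int_0^T (f,u(t))\,dt\Bigr] \leq \Bigl(\frac{1}{T|D|}\int_0^T \|f\|^2\,dt\Bigr)^{1/2}\,\mathbb{E}\Bigl[\Bigl(\frac{1}{T|D|}\int_0^T \|u(t)\|^2\,dt\Bigr)^{1/2}\Bigr],
\end{equation*}
where the first factor is deterministic so it pulls outside the expectation. Assembling these estimates produces exactly \eqref{Eq-3}. Finally I would take $\limsup_{T\to\infty}$ on both sides; this is legitimate because Corollary \ref{Cor1} guarantees $\langle\varepsilon\rangle$ and $\langle \tfrac{1}{|D|}\|u\|^2\rangle$ exist in $L^1(\Omega)$ and the $\mathcal{O}(1/T)$ boundary term disappears. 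Reading off the resulting quantities via Definition \ref{Scales} yields $\mathbb{E}[\langle\varepsilon\rangle] \leq \tfrac{1}{2}G^2 + FU$.

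The main obstacle is purely a matter of bookkeeping in taking expectation and $\limsup$ through the Cauchy--Schwarz estimate: one needs $\limsup$ of the product to be bounded by the product of $\limsup$'s, which works here because the $\|f\|^2$-factor converges (it is essentially deterministic and defined via $\langle\cdot\rangle$), and because the expectation of the square root of the time average of $\|u\|^2$ is dominated by $U$ after taking $\limsup$. Verifying that the Itô integral indeed has vanishing expectation (not merely that it is a local martingale) is the other delicate point, but this is already established in Proposition \ref{KEbound}.
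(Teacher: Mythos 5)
Your proposal is correct and follows essentially the same route as the paper: average the energy inequality \eqref{EnergyEq2} over $[0,T]$, use the square-integrability of the stochastic integral from Proposition \ref{KEbound} to kill its expectation, control the boundary term via \eqref{1overT}, apply Cauchy--Schwarz to the forcing term to arrive at \eqref{Eq-3}, and pass to the limit superior using Corollary \ref{Cor1} and Definition \ref{Scales}. Your explicit attention to interchanging $\limsup$ with the product in the Cauchy--Schwarz step is a point the paper treats implicitly, but the argument is the same.
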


Next we find an upper bound on $F$ to get a bound on (\ref{FirstIneq}). Taking the inner product of (\ref{SNSE}) with $f(x)$  and integrating by parts gives, 
\begin{equation}
d(u(t), f) + (u\cdot \nabla u, f)dt + (\nu \nabla u,\nabla f) dt = (f(x), f(x)) dt +  \sum\limits_{k} g_k(t)(e_k, f) dW_k(t).
\end{equation}
Averaging the above equality over $[0,T]$
% applying the It\^{o}'s formula (P. 65 \cite{E13})
yields, 
\begin{equation}\label{Eq1}
\begin{split}
\frac{(u(T) - u_0, f(x))}{T \, |D|} + \frac{1}{|D|}\frac{1}{T} \int_0^T (u\cdot \nabla u, f) dt +  \frac{1}{|D|}\frac{1}{T} \int_0^T \nu ( \nabla u,\nabla f) dt  \\
= \frac{1}{|D|}\frac{1}{T} \int_0^T  \|f\|^2 dt +  \frac{1}{|D|}\frac{1}{T} \sum\limits_{k} \int_0^T g_k(t) (e_k , f(x)) dW_k(t).
\end{split}
\end{equation}  
Again, notice that, 
$$ \frac{| (u(T) - u_0, f(x)) | }{T \, |D|}  \leq  \frac{1 }{T} \left(  \sup_{0  \leq t }  [\|u(t, \cdot, \omega)\|^2]  + \| f \|^2 \right),$$
where the right hand side now sits in $L^2( \Omega, \mathbb{R} ;  \mathcal{F}, \mathbb{P} )$ by applying Doob's inequality on $\| u(t,\cdot, \omega)\|$, then, 
$$ \mathbb{E} \left[ \frac{(u(T) - u_0, f(x))}{T \, |D|} \right] =  \mathcal{O} (\frac{1}{T}) \rightarrow 0 \hspace{10pt} \mbox{as} \hspace{10pt} T\rightarrow 0.$$
Moreover,  from $F< \infty$ and $\sum\limits_{k}  |g_k(t)|^2 < \infty$ for all $t \in (0,\infty)$, together with the independence of $\{W_k\}$ and orthogornality of $\{e_k \}$, we have by It\^{o}'s Isometry  that,
\begin{eqnarray*}
\mathbb{E} \left[  \left( \sum_k \int_0^t g_k(s) (e_k , f ) \, dW_k(s) \right)^2 \right] \leq  \sup_{s \in [0,t) }  \left \{ \sum\limits_{k}  |g_k(t)|^2  \right\}  \, \mathbb{E} \left( t \|f \|^2 \right) < \infty \,
\end{eqnarray*}
%and thus $\sum_k \int_0^t (g_k(s), f(s)) \, dW_k(s)  \in \mathbb{L}^2 ([0,T], \Omega,   \mathbb{V}; \mathcal{A},\mathcal{F},  \mathbb{P}) $.
The standard property of It\^{o} integral again implies that,
\[
 \mathbb{E} \left[ \sum_k \int_0^T g_k(t) (e_k, f) \, dW_k(t)\right] = 0 \,.
\]
Therefore, considering above identity, and taking expected value in \eqref{Eq1}. and passing to the limit superior as $T \rightarrow \infty$  gives,  
\begin{equation}\label{Eq2}
F^2 = \limsup_{T \rightarrow \infty} \mathbb{E} \big[   \frac{1}{|D|}\frac{1}{T} \int_0^T (u\cdot \nabla u, f) dt +  \frac{1}{|D|}\frac{1}{T} \int_0^T \nu ( \nabla u,\nabla f) dt \big].
\end{equation}

The rest of analysis requires to approximate the two term  on the right-hand-side in the above equality. Since $\nabla \cdot u = 0 $ we have $ (u\cdot \nabla u, f) = (\nabla \cdot (u\otimes u),   f)$ , and using integration  by part we obtain,
$$\left|\frac{1}{|D|}\frac{1}{T} \int_0^T (u\cdot \nabla u, f) dt \right| \leq \frac{1}{|D|}\frac{1}{T} \int_0^T |(u \otimes u, \nabla f)| dt \leq \|\nabla f\|_{L^{\infty}} \frac{1}{|D|}\frac{1}{T} \int_0^T  \|u\|^2 dt,$$
then using  Definition (\ref{Scales}),  the first term in (\ref{Eq2}) can be estimated with dominated convergence theorem as,
\begin{equation}\label{Eq2-a}
\left |\mathbb{E} \big[   \frac{1}{|D|}\frac{1}{T} \int_0^T (u\cdot \nabla u, f) dt \right| \leq \frac{F}{L} \,   \mathbb{E} \big[   \frac{1}{|D|}\frac{1}{T} \int_0^T  \|u\|^2 dt \big] = \frac{F}{L} U + o(1), \hspace{10pt}\mbox{as} \hspace{10pt} T\rightarrow \infty.
\end{equation}
To estimate the second term in (\ref{Eq2}), by using the Cauchy-Schwarz-Young inequality and Definition  (\ref{Scales}) we have, 
\begin{equation*}
\begin{split}
\left |\frac{1}{|D|}\frac{1}{T}\int_0^T \nu ( \nabla u,\nabla f) dt \right| &\leq (\frac{1}{|D|}\frac{1}{T} \int_0^T \nu \|\nabla u\|^2 dt)^{\frac{1}{2}} \, (\frac{1}{|D|}\frac{1}{T} \int_0^T \nu \|\nabla f\|^2 dt)^{\frac{1}{2}}\\
&\leq \sqrt{\nu} \frac{F}{L} \,  (\frac{1}{|D|}\frac{1}{T} \int_0^T \nu \|\nabla u\|^2 dt)^{\frac{1}{2}}.
\end{split}
\end{equation*}
Hence the second, viscosity, term is estimated using Jensen's inequality (P. 30 \cite{E13}) as follows,
\begin{equation}\label{Eq2-b}
\begin{split}
\left| \frac{1}{|D|}\frac{1}{T} \mathbb{E}\big[\int_0^T \nu ( \nabla u,\nabla f) dt\big] \right| &\leq  \sqrt{\nu} \frac{F}{L} \, \mathbb{E}\big[ (\frac{1}{|D|}\frac{1}{T} \int_0^T \nu \|\nabla u\|^2 dt)^{\frac{1}{2}} \big]\\
& \leq  \sqrt{\nu} \frac{F}{L} \, (\mathbb{E}\big[ \frac{1}{|D|}\frac{1}{T} \int_0^T \nu \|\nabla u\|^2 dt\big] )^{\frac{1}{2}}\\
& \leq  \sqrt{\nu} \frac{F}{L} (\mathbb{E} [\langle\varepsilon \rangle] )^{\frac{1}{2}} + o(1) , \hspace{10pt} \mbox{as} \hspace{10pt} T\rightarrow \infty.
\end{split}
\end{equation}
Inserting (\ref{Eq2-a}) and (\ref{Eq2-b}) in (\ref{Eq2}, after passing $T$ to infinity, yields,
\begin{equation*}
 F \leq \frac{U^2}{L} + \frac{\sqrt{\nu}}{L}  \, (\mathbb{E} [\langle\varepsilon \rangle] )^{\frac{1}{2}}
\end{equation*}
Insert multipliers of $U^{\frac{1}{2}}$ and $U^{-\frac{1}{2}}$
in the two terms
\begin{equation*}
 F \leq \frac{U^2}{L} + \frac{\sqrt{U\,\nu}}{L}  \, \frac{(\mathbb{E} [\langle\varepsilon \rangle] )^{\frac{1}{2}}}{\sqrt{U}} \,.
\end{equation*}
Finally  applying the  Young’s inequality on the above inequality yields to an estimate on $F$ 
\begin{equation}\label{SecondIneq1}
F \leq \frac{U^2}{L} +  \frac{1}{2} \frac{U \, \nu}{L^2}+\frac{1}{2} \frac{(\mathbb{E} [\langle\varepsilon \rangle] )}{U}.
\end{equation}
Using the  above estimate  (\ref{SecondIneq1}) for $F$ in (\ref{FirstIneq_combined}) gives,
\begin{equation*}
 \mathbb{E} [\langle\varepsilon \rangle] \leq \frac{1}{2}  G^2  +  \frac{U^3}{L} +  \frac{1}{2} \frac{U^2 \, \nu}{L^2}+\frac{1}{2} (\mathbb{E} [\langle\varepsilon \rangle] ).
\end{equation*}
Therefore we have the following  bound,  
\begin{equation*} \label{final}
\mathbb{E} [\langle\varepsilon \rangle] \leq  G^2  + 2 \frac{U^3}{L} +   \frac{U^2 \, \nu}{L^2} \leq G^2 + (2+ \frac{1}{\Rey})\frac{U^3}{L}.
\end{equation*}
We have thus estimated the  upper bound  which we summarize as,

\begin{thm}\label{MainThm1}
Let  $D = [0,\ell]^3$ denote the periodic box in $3d$, and $u(x, t; \omega)$ be a mean-zero martingale  solution of the Stochastic Navier-Stokes equations on $[0,\infty)$:
 \begin{equation} 
\begin{split}
d u=  \left( - u \cdot \nabla u + \nu \Delta u - \nabla p +  f(x) \right) dt & +  \sum\limits_{k} g_k(t) e_k(x) \, dW_k(t; \omega)\hspace{10pt} \mbox{and}\hspace{10pt} \grad \cdot u =0 \hspace{10pt} \mbox{in}\,\, D, \\
& u(x,0)=u_0(x) \hspace{10pt} \mbox{in}\,\, D,
\end{split}
\end{equation}
with boundary and data conditions given by (\ref{DataCond}) and (\ref{BC}). Then the mean value of  time-averaged energy dissipation rate per unit mass,
$$\mathbb{E} [\langle\varepsilon \rangle] := \int_{\Omega} \left[ \limsup\limits_{T\rightarrow\infty}  \frac{1}{|\Omega|}\, \frac{1}{T} \int_{0}^{T}   \nu\|\nabla u (t, \cdot, \omega )\|^2  \, dt \, \right] d\mathbb{P}, $$
satisfies,
 $$  \mathbb{E} [\langle\varepsilon \rangle]  \leq G^2 + (2+ \frac{1}{\Rey})\frac{U^3}{L},$$ 
where $U$ is the mean value of the root-mean-square (space and time averaged), $L$ is the longest length scale in the applied forcing function, and   $G^2$ is the total energy rate supplied by the random force defined in Definition \ref{Scales}.

\end{thm}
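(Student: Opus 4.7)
The plan is to combine two estimates: a direct consequence of the energy inequality that produces a bound $\mathbb{E}[\langle\varepsilon\rangle] \leq \tfrac{1}{2}G^2 + FU$, and an estimate on the forcing amplitude $F$ obtained by testing the equation against $f$ itself, which yields a bound for $F$ in terms of $U/L$, $\nu$, and $(\mathbb{E}[\langle\varepsilon\rangle])^{1/2}$. Feeding the second into the first and applying Young's inequality to absorb half of $\mathbb{E}[\langle\varepsilon\rangle]$ on the right-hand side will deliver the claimed upper bound.

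For the first estimate, I start from the pathwise energy inequality \eqref{EnergyEq2}, divide by $T$, and take expectation. The kinetic-energy term $\mathbb{E}[\|u(T)\|^2]/T$ is $\mathcal{O}(1/T)$ by Proposition \ref{KEbound} together with Doob's inequality (which also controls the dominant envelope needed for dominated convergence). The It\^o integral $\sum_k\int_0^T g_k(e_k,u)\,dW_k$ is a genuine martingale and not merely a local martingale, because the It\^o isometry combined with $\sup_k\{|g_k|^2\}$, $\sum_k|g_k|^2 < \infty$, and the kinetic-energy bound gives it finite second moment; hence its expectation vanishes. The remaining deterministic forcing term is handled by Cauchy--Schwarz, and passing $T\to\infty$ via Corollary \ref{Cor1} yields \eqref{FirstIneq}.

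For the bound on $F$, I apply It\^o's formula to $(u(t),f)$ -- equivalently, test the equation against the time-independent $f$ -- and average over $[0,T]$. The boundary contribution $(u(T)-u_0,f)/T$ is again $\mathcal{O}(1/T)$ by Doob's inequality, and the new stochastic integral $\sum_k\int_0^T g_k(e_k,f)\,dW_k$ is square integrable (since $F<\infty$ and $\sum_k|g_k|^2<\infty$), so it has zero mean. Using $\nabla\cdot u = 0$ to rewrite $(u\cdot\nabla u,f) = -(u\otimes u,\nabla f)$ gives the nonlinear estimate $|(u\cdot\nabla u,f)| \leq \|\nabla f\|_\infty \|u\|^2$, which after taking expectation and $\limsup_T$ contributes at most $(F/L)U$ by Definition \ref{Scales}. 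The viscous cross-term $\nu(\nabla u,\nabla f)$ is handled by Cauchy--Schwarz in space-time followed by Jensen's inequality in $\omega$, yielding a contribution bounded by $\sqrt{\nu}(F/L)(\mathbb{E}[\langle\varepsilon\rangle])^{1/2}$. Altogether this gives $F \leq U^2/L + (\sqrt{\nu}/L)(\mathbb{E}[\langle\varepsilon\rangle])^{1/2}$.

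The final step is to multiply the second term by $\sqrt{U}/\sqrt{U}$ and apply Young's inequality in the form $ab \leq \tfrac{1}{2}a^2 + \tfrac{1}{2}b^2$ with $a = \sqrt{U\nu}/L$ and $b = (\mathbb{E}[\langle\varepsilon\rangle])^{1/2}/\sqrt{U}$, producing \eqref{SecondIneq1}. Substituting into \eqref{FirstIneq_combined}, the $\tfrac{1}{2}\mathbb{E}[\langle\varepsilon\rangle]$ term is moved to the left, leaving $\mathbb{E}[\langle\varepsilon\rangle] \leq G^2 + 2U^3/L + U^2\nu/L^2 = G^2 + (2 + 1/\Rey)U^3/L$ after identifying $\nu/(UL) = 1/\Rey$. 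The main obstacle is the careful handling of the It\^o integrals: showing they are true (not just local) martingales with zero expectation, and controlling the $\limsup_T$ passages requires the integrable dominating functions supplied by Proposition \ref{KEbound} and estimate \eqref{amazing} from Corollary \ref{Cor1}; without those, neither dominated convergence nor the vanishing of the stochastic term in the mean can be invoked.
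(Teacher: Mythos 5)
Your proposal is correct and follows essentially the same route as the paper's proof: the energy inequality gives $\mathbb{E}[\langle\varepsilon\rangle] \leq \tfrac{1}{2}G^2 + FU$, testing against $f$ gives $F \leq U^2/L + (\sqrt{\nu}/L)(\mathbb{E}[\langle\varepsilon\rangle])^{1/2}$, and Young's inequality absorbs the half of $\mathbb{E}[\langle\varepsilon\rangle]$ exactly as in \eqref{SecondIneq1}. The supporting details you flag --- square integrability of the It\^{o} integrals via the isometry, the $\mathcal{O}(1/T)$ boundary terms via Doob's inequality, and the dominated-convergence passage from Proposition \ref{KEbound} and Corollary \ref{Cor1} --- are precisely the ones the paper uses.
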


The result in Theorem \ref{MainThm1} provides an upper bound for the  mean of the  dissipation rate.  This estimate is consistent with both phenomenology (\ref{E-Scaling}) and the rate proven the Navier-Stokes equations in \cite{DF02}.
\section{EXACT  DISSIPATION RATE UNDER A FURTHER ASSUMPTION}\label{Section4}
To obtain lower bounds on $\mathbb{E} [\langle\varepsilon \rangle]$, in this section, let us make a further assumption to the martingale solution to (\ref{SNSE}) on $[0,\infty)$ as follows:\\

\noindent \textbf{Assumption (A)}  Energy equality on a  martingale solution to (\ref{SNSE}) on $[0,\infty)$:
  \begin{equation}\label{EnergyEq1} 
  \mathbb{E} \left[ \|u(T)\|^2 \right] + 2    \mathbb{E} \left[ \int_0^T \nu \|\nabla u(t)\|^2 dt \right] =   \mathbb{E} \left[ \|u_0\|^2  \right]  + \int_0^T \text{Tr} (g^* g (t)) dt  +  2  \mathbb{E} \left[ \int_0^T  (f(x), u(t)) dt  \right]   \,.
  \end{equation}

\begin{re}
Whether Assumption \textbf{(A)} holds or not heavily depends on the regularity of the  martingale solution, and the singularity type that the solution may carry. Although  the energy equality has not been mathematically proven   for turbulence models even in the deterministic case, \say{measurements in all flows of real fluid in the three dimensions \textbf{\textit{satisfies the energy equality}}, in concert with the basic conservation laws of physics}, page  57 of  \cite{FMRT01}.  There are abundant shreds of evidence that Assumption \textbf{(A)} may hold:
\begin{itemize}
\item
Different versions of locally strong/weak pathwise solutions with respect to the Brownian filtration, e.g. as defined in \cite{B00,strong}, are shown to exists up to a maximal stopping time $\tau: \Omega \rightarrow [0,\infty)$ which is $\mathcal{A}$-measurable and $\mathbb{P}$-a.e. positive \cite{strong}.  Moreover, in that case, energy equality up to $\tau(\omega)$ holds $\mathbb{P}$-a.e.  In particular, under such a regularity assumption of $u$, one will be allowed to directly application of It\^{o}'s Lemma and the momentum equation (\ref{SNSE}) of $u$ to obtain via a product rule that for all $t > 0$, that $\mathbb{P}$-a.e. and $t < \tau (\omega)$,
\[
 d\|u(t)\|^2 + 2 \nu \|\nabla u (t) \|^2 dt =  \text{Tr} (g^* g (t)) dt + 2 (f(x), u(t)) dt + 2  \sum\limits_{k} g_k(t) (e_k, u(t)) dW_k(t) \,.
\]
Upon a further checking of the stochastic part being square integrable, standard property of It\^{o} integral then quickly implies for all $t >0$,
\begin{equation*}
 \begin{split}
\mathbb{E} \left[ \|u( t \wedge \tau(\omega) )\|^2 \right] + 2    \mathbb{E} \left[ \int_0^{t \wedge \tau(\omega) } \nu \|\nabla u(s)\|^2 ds \right] & =   \mathbb{E} \left[ \|u_0\|^2  \right]  + \int_0^{ t \wedge \tau(\omega) } \text{Tr} (g^* g (s)) ds\\ 
  & +  2  \mathbb{E} \left[ \int_0^{ t \wedge \tau(\omega) }  (f(x), u(s)) ds  \right].
    \end{split}
\end{equation*}
where $ t \wedge \tau(\omega) :=  \min\{t,\tau(\omega)\}$.  It is also shown that when $d =2$, one can choose $\tau (\omega) = \infty$  $\mathbb{P}$-a.e.

\item It is shown in  \cite{FR02} that at every time the set of singularities of  a class of the martingale solutions of (\ref{SNSE})  is empty with probability one. This fact means that  at every time it is not possible to see the singularities (and possibly blow-up):  only a negligible set of paths may have singularities at a fixed time. 

\end{itemize}
\end{re}

In this subsection, we always assume that Assumption \textbf{(A)} holds.
Now, similar to the way we obtain \eqref{Eq-3}, from \eqref{EnergyEq2}, combining the equality (\ref{EnergyEq1})  together with the estimate in (\ref{1overT}) we have,
\begin{equation}\label{Eq-3aa}
\begin{split}
\mathbb{E} \big[\frac{1}{|D|} \frac{1}{T}  \int_0^T \nu \|\nabla u(t)\|^2 dt \big] = \mathcal{O} (\frac{1}{T})+ \frac{1}{2}  \frac{1}{|D|} \frac{1}{T} \int_0^T \text{Tr} (g^* g (t)) dt + \mathbb{E}  \big[(\frac{1}{T} \frac{1}{|D|} \int_0^T (f, u(t) ) dt)\big] \, ,
  \end{split}
\end{equation}
Therefore, rearranging the terms and by Cauchy–Schwarz inequality,
\begin{equation*}
\begin{split}
\frac{1}{2}  \frac{1}{|D|} \frac{1}{T} & \int_0^T  \text{Tr} (g^* g (t)) dt = \mathcal{O} (\frac{1}{T}) +  \mathbb{E} \big[\frac{1}{|D|} \frac{1}{T}  \int_0^T \nu \|\nabla u(t)\|^2 dt \big] + \mathbb{E}  \big[(\frac{1}{T} \frac{1}{|D|} \int_0^T ( - f, u(t) ) dt)\big]   \\
& \leq \mathcal{O} (\frac{1}{T}) +  \mathbb{E} \big[\frac{1}{|D|} \frac{1}{T}  \int_0^T \nu \|\nabla u(t)\|^2 dt \big] 
+ (\frac{1}{T} \frac{1}{|D|} \int_0^T \|f\|^2 dt)^{\frac{1}{2}} \, \mathbb{E}  \big[(\frac{1}{T} \frac{1}{|D|} \int_0^T \|u(t)\|^2 dt)^{\frac{1}{2}}\big].
\end{split}
\end{equation*}
Therefore passing to the limit superior as $T \rightarrow \infty$, and by dominated convergence theorem again, we have,
\begin{equation}\label{SecondIneq}
\frac{1}{2}  G^2 \leq  \mathbb{E} [\langle\varepsilon \rangle] + F U.
\end{equation}
With this inequality as above, we are motivated to define the following.

\begin{definition}
The stochastically forced  NSE  \eqref{SNSE} is stochastically dominated if $  G^2 > 2 F U.$
\end{definition}
This  means the stochastic term given by the random force  $ g(t) \, d w (t;\omega) = \sum\limits_{k} g_k(t; x) \, dW_k(t; \omega)$ dominates the behaviour of the solutions, and therefore, if Assumption \textbf{(A)} holds,
$$
 0 < \frac{1}{2}  G^2  - F U \leq  \mathbb{E} [\langle\varepsilon \rangle].
$$
Moreover, applying the upper bounds (\ref{SecondIneq}) on $F$, it is then straightforward to see that

 $$\frac{1}{3}  G^2  -  (\frac{2}{3}+ \frac{1}{3}\frac{1}{\Rey})\frac{U^3}{L}  \leq  \mathbb{E} [\langle\varepsilon \rangle].$$ 
 
Considering Theorem \ref{MainThm1} and the above lower bound, we obtain the following.

\begin{thm}\label{MainThm2}
Consider assumptions in Theorem \ref{MainThm1}. In addition suppose Assumption \textbf{(A)} and that
$$ G^2 > 2 FU,$$
where $U$, $F$  and $G$ are defined in Definition \ref{Scales}. Then  $\mathbb{E} [\langle\varepsilon \rangle]$ satisfies
\begin{equation} \label{LetF=0}
\frac{1}{2}  G^2 - F U \leq  \mathbb{E} [\langle\varepsilon \rangle] \leq \frac{1}{2}  G^2 + F U \,,
\end{equation}
and moreover,
 $$\frac{1}{3}  G^2  - \frac{1}{3} (2+ \frac{1}{\Rey})\frac{U^3}{L}  \leq  \mathbb{E} [\langle\varepsilon \rangle]  \leq G^2 + (2+ \frac{1}{\Rey})\frac{U^3}{L}.$$

\end{thm}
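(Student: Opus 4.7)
The plan is to combine three ingredients that are essentially already assembled in the preceding pages: the upper bound from Theorem \ref{MainThm1}, the identity (\ref{SecondIneq}) which is the energy-equality analogue of (\ref{FirstIneq_combined}), and the bound (\ref{SecondIneq1}) on $F$.

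For the first chain of inequalities (\ref{LetF=0}), I would argue as follows. The upper bound $\mathbb{E}[\langle\varepsilon\rangle] \leq \frac{1}{2}G^2 + FU$ is exactly (\ref{FirstIneq_combined}), which only used the energy inequality and so holds \emph{a fortiori} under Assumption \textbf{(A)}. For the lower bound, rearranging (\ref{SecondIneq}), which was derived from the equality in Assumption \textbf{(A)} by the same time-averaging argument that produced (\ref{FirstIneq_combined}) from the inequality, gives $\frac{1}{2}G^2 - FU \leq \mathbb{E}[\langle\varepsilon\rangle]$. Under the stochastic dominance hypothesis $G^2 > 2FU$, the left-hand side is strictly positive, so the lower bound is nontrivial, and (\ref{LetF=0}) follows.

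For the second chain, the upper bound $\mathbb{E}[\langle\varepsilon\rangle] \leq G^2 + (2 + 1/\Rey)\frac{U^3}{L}$ is precisely the conclusion of Theorem \ref{MainThm1}. For the lower bound, I would substitute the $F$-estimate (\ref{SecondIneq1}) into the lower half of (\ref{LetF=0}). Multiplying (\ref{SecondIneq1}) through by $U$ yields $FU \leq \frac{U^3}{L} + \frac{1}{2}\frac{U^2\nu}{L^2} + \frac{1}{2}\mathbb{E}[\langle\varepsilon\rangle]$, so
\begin{equation*}
\tfrac{1}{2}G^2 - \tfrac{U^3}{L} - \tfrac{1}{2}\tfrac{U^2\nu}{L^2} - \tfrac{1}{2}\mathbb{E}[\langle\varepsilon\rangle] \leq \mathbb{E}[\langle\varepsilon\rangle].
\end{equation*}
Collecting the $\mathbb{E}[\langle\varepsilon\rangle]$ terms on the right, dividing by $3/2$, and using the identity $U^2\nu/L^2 = \frac{1}{\Rey}\frac{U^3}{L}$ delivers the claimed lower bound $\frac{1}{3}G^2 - \frac{1}{3}(2 + \frac{1}{\Rey})\frac{U^3}{L} \leq \mathbb{E}[\langle\varepsilon\rangle]$.

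Since the machinery, namely (\ref{FirstIneq_combined}), (\ref{SecondIneq}), and (\ref{SecondIneq1}), is already in place, no real obstacle remains; what is left is purely algebraic rearrangement. The one subtle point worth flagging is that (\ref{SecondIneq}) truly requires the equality in Assumption \textbf{(A)} rather than only an inequality, since using only $\leq$ in the energy balance would flip the sign of the key $FU$ contribution and preclude any lower bound on $\mathbb{E}[\langle\varepsilon\rangle]$.
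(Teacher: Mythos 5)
Your proposal is correct and follows essentially the same route as the paper: the lower half of (\ref{LetF=0}) comes from rearranging (\ref{SecondIneq}) (which indeed requires the equality in Assumption \textbf{(A)}), the upper half is (\ref{FirstIneq_combined}), and the Reynolds-number form of the lower bound follows by inserting the $F$-estimate (\ref{SecondIneq1}) and the identity $U^2\nu/L^2 = \frac{1}{\mathcal{R}e}\frac{U^3}{L}$, exactly as in the text (where the reference to ``(\ref{SecondIneq})'' for the bound on $F$ is evidently a typo for (\ref{SecondIneq1})). No gaps.
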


The result in Theorem (\ref{MainThm2})  is remarkable, showing a stochastically dominated stochastically forced NSE has its dissipation rate precisely  behaving as a function of the Reynolds number, (\ref{E-Scaling}), as it is discussed in introduction. 

\begin{cor}\label{Cor2}
Considering the conditions of Theorem \ref{MainThm2}, and   in absence of deterministic force, i.e. $f=0$, we obtain the exact dissipation rate as,

$$\mathbb{E} [\langle\varepsilon \rangle]  = \frac{1}{2} G^2.$$
\end{cor}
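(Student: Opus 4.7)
The plan is to obtain this equality as an immediate specialization of Theorem \ref{MainThm2} to the case $f \equiv 0$. First I would observe that when the deterministic force vanishes identically, Definition \ref{Scales} forces $F = 0$, so the stochastic-dominance hypothesis $G^2 > 2 F U$ demanded by Theorem \ref{MainThm2} collapses to $G^2 > 0$, which holds as soon as the noise is nontrivial. Assumption \textbf{(A)} (energy equality) is carried over from the hypotheses of Theorem \ref{MainThm2}, so every premise needed to invoke (\ref{LetF=0}) is in place.

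Next I would simply insert $F = 0$ into the two-sided inequality (\ref{LetF=0}),
\[
\tfrac{1}{2} G^2 - F U \;\leq\; \mathbb{E}[\langle \varepsilon \rangle] \;\leq\; \tfrac{1}{2} G^2 + F U,
\]
which pinches $\mathbb{E}[\langle \varepsilon \rangle]$ between $\tfrac{1}{2} G^2$ and $\tfrac{1}{2} G^2$, giving the stated identity. There is no serious obstacle here: the inequality (\ref{LetF=0}) was derived without ever dividing by $F$, the upper bound coming from the general energy inequality together with the Cauchy--Schwarz estimate $|(u,f)| \leq \|u\| \|f\|$, and the lower bound from rearranging the energy equality supplied by Assumption \textbf{(A)}; both estimates remain valid, and in fact simplify, when the $(f,u)$ term is absent.

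If one preferred a self-contained argument rather than a quote of (\ref{LetF=0}), the same conclusion can be obtained directly: set $f = 0$ in the energy equality (\ref{EnergyEq1}), divide by $T\,|D|$, and take $T \to \infty$. The boundary term $\mathbb{E}[\|u(T)\|^2 - \|u_0\|^2]/T$ is $\mathcal{O}(1/T)$ by (\ref{1overT}), and the time average of $\text{Tr}(g^* g(t))/|D|$ converges to $G^2$ by definition, while Corollary \ref{Cor1} justifies the limit of the dissipation term. This yields $\mathbb{E}[\langle \varepsilon \rangle] = \tfrac{1}{2} G^2$ with no further work. The only conceptual point worth flagging is that the absence of deterministic forcing is precisely what removes the slack $\pm F U$ from the bound; the zeroth-law equality is therefore a consequence of the exact energy balance between stochastic input rate $\tfrac{1}{2} G^2$ and viscous dissipation rate, as is physically expected.
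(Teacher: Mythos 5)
Your proposal is correct and follows exactly the paper's own proof, which consists precisely of setting $F=0$ in the two-sided bound (\ref{LetF=0}) so that the inequality pinches $\mathbb{E}[\langle\varepsilon\rangle]$ to $\tfrac{1}{2}G^2$. The additional self-contained derivation from the energy equality (\ref{EnergyEq1}) is a valid but unnecessary elaboration of the same argument.
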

\begin{proof}
Let $F = 0$ in (\ref{LetF=0}).
\end{proof}

\section{Variance Estimate of Energy Dissipation}\label{Section5}

We now estimate the variance, 
\[
\mathbb{V}\text{ar} (\langle \varepsilon \rangle) \coloneqq \mathbb{E} (\langle \varepsilon \rangle ^2 ) - [ \mathbb{E} (\langle \varepsilon \rangle ) ]^2
\]
if the right handside is well-defined.  In order to have the right hand side well-defined and to establish a bound, let us consider further assumptions,

\noindent \textbf{Assumption (B)}  
An energy bound of  martingale solution to (\ref{SNSE}):
  \begin{equation}\label{Energy_ineq_haha} 
 \mathbb{E} \left( \int_0^T \|u(t)\|^6 dt \right) < C(T) < \infty \,.
  \end{equation}
for all $T \in [0,\infty)$. 

\noindent \textbf{Assumption (C)}  
A bound on the noise term $dG := g(t) \, d w (t;\omega)$:
  \begin{equation}\label{Energy_ineq_G} 
\left\langle \left ( \sum\limits_{k} |g_k|^2 \right)^2 \right \rangle \coloneqq K_G^4  < \infty. 
  \end{equation}

\noindent
With the above two assumptions,
applying \eqref{EnergyIneq1} and It\^{o}'s Lemma \cite{E13} to $ \left(  \| u(t)\|^2 \right)^2$, we obtain
 \begin{equation*}
 \begin{split}
  d\|u(t)\|^4 \leq   &\left(  3 \|u(t)\|^2 \text{Tr} (g^* g (t))    +   4 \|u(t)\|^2  (f(x), u(t)) -   4   \nu \|u(t)\|^2 \|\nabla u (t) \|^2  \right) dt\\
   +  &4  \sum\limits_{k} \|u(t)\|^2 g_k(t) (e_k , u(t)) dW_k(t).
 \end{split}
  \end{equation*}
where we used $ \| [g (t)] \, u(t)\|^2  \leq  \text{Tr} (g^* g (t)) \| u(t)\|^2$.  The above is equivalent to, for all $T>0$,
 \begin{equation} \label{EnergyEq2new}
  \begin{split}
  \|u(T)\|^4 + 4  \int_0^T \nu  \|u(t)\|^2 \|\nabla u (t) \|^2  dt \leq   & \|u_0\|^4  + 3 \int_0^T  \|u(t)\|^2   \text{Tr} (g^* g (t)) dt +  4 \int_0^T \|u(t)\|^2  (f(x), u(t)) dt \\
  +& \,  4 \sum\limits_{k} \int_0^T  \|u(t)\|^2 \, g_k(t) (e_k, u(t)) dW_k(t).
  \end{split}
  \end{equation}

\begin{prop}\label{KEsqbound}
Suppose Assumptions \textbf{(B)} and \textbf{(C)} hold. For a given  martingale solution to (\ref{SNSE}) on $[0,\infty)$, we have
$$
 \sup_{t} \mathbb{E} [\|u(t)\|^4]\leq \ C (\mbox{data}) < \infty.
$$
\end{prop}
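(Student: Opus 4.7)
My plan is to adapt the submartingale argument used for Proposition \ref{KEbound}, now to the fourth moment, starting from the pathwise inequality \eqref{EnergyEq2new}. The three crucial ingredients will be: (i) verifying that the new stochastic integral is a square-integrable martingale with zero mean, (ii) absorbing the $\|u\|^4$ terms produced on the right-hand side by Young's inequality into the dissipation via a Poincar\'e bound, and (iii) invoking the submartingale convergence theorem exactly as in the proof of Proposition \ref{KEbound}.

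First, I would check that $\sum_k \int_0^T \|u\|^2 g_k (e_k, u)\, dW_k$ is square-integrable. By It\^o's isometry and $|(e_k,u)|^2 \le \|u\|^2$,
\begin{equation*}
\mathbb{E}\!\left[\Big(\sum_k \int_0^T \|u\|^2 g_k(e_k,u)\,dW_k\Big)^{\!2}\right]
= \mathbb{E}\!\int_0^T \|u\|^4 \sum_k |g_k|^2 |(e_k,u)|^2\,dt
\le \mathbb{E}\!\int_0^T \|u\|^6 \sum_k |g_k|^2\,dt,
\end{equation*}
which is finite on every $[0,T]$ by Assumption \textbf{(B)} together with the local boundedness of $\sum_k |g_k(t)|^2$ on $[0,T]$ (from continuity of the $g_k$, with Assumption \textbf{(C)} supplying the time-averaged bound needed later). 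This makes the stochastic integral a genuine martingale, so its conditional expectation given $\mathcal{F}_s$ vanishes.

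Next, taking conditional expectation of \eqref{EnergyEq2new} given $\mathcal{F}_s$ and using Young's inequality, I would bound the drift terms as
\begin{equation*}
3\|u\|^2 \text{Tr}(g^*g) \le \eta \|u\|^4 + \tfrac{9}{4\eta}\big(\text{Tr}(g^*g)\big)^2,
\qquad
4\|u\|^2 |(f,u)| \le 4\|f\|\|u\|^3 \le \eta \|u\|^4 + C_\eta \|f\|^4.
\end{equation*}
Since the flow is mean zero, the Poincar\'e inequality applied to $u$ gives $4\nu\|u\|^2\|\nabla u\|^2 \ge 4\nu\lambda_1 \|u\|^4$. Choosing $\eta$ small so that $2\eta < 4\nu\lambda_1$, the $\|u\|^4$ terms on the right-hand side are absorbed into the dissipation on the left, leaving, for $t>s$ large enough,
\begin{equation*}
\mathbb{E}[\|u(t)\|^4 \mid \mathcal{F}_s] + c\nu\,\mathbb{E}\!\left[\int_s^t \|u(r)\|^4\,dr \,\Big|\,\mathcal{F}_s\right] \le \|u(s)\|^4 + C\,(t-s),
\end{equation*}
where $C$ depends on $\|f\|_{L^4}$ and on the time-averaged second moment of $\text{Tr}(g^*g)$ (finite by Assumption \textbf{(C)}, via $K_G^4$).

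Finally, mimicking the tail of the proof of Proposition \ref{KEbound}, I would set $\mathbb{Y}(t) = \exp(c\nu\lambda_1 t)\|u(t)\|^4$, apply It\^o's product rule to eliminate the $\|u\|^4$ integral in favour of a free $\exp$-growing term, and conclude from the submartingale convergence theorem that $\sup_{t} \mathbb{E}[\|u(t)\|^4] < \infty$. The main obstacle here is the first step: the cubic-in-$u$ integrand $\|u\|^2 g_k(e_k,u)$ forces us to control $\|u\|^6$ in time-space, and this is precisely why Assumption \textbf{(B)} is imposed. Once the martingale property is secured, the rest is a routine adaptation of the second-moment calculation.
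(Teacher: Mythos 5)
Your proposal follows essentially the same route as the paper's proof: verify square-integrability of the stochastic integral $\sum_k\int_0^t\|u\|^2 g_k(e_k,u)\,dW_k$ via It\^o's isometry, Assumption \textbf{(B)} and the bound on $\sum_k|g_k|^2$; absorb the $\|u\|^4$ drift terms into the dissipation using Young's and Poincar\'e inequalities (the paper's constant $K_{D,\nu}(K_G^4+F^4)$ matches your $C$); and conclude with the exponentially weighted process and the submartingale convergence theorem. The only cosmetic difference is that you bound $\sum_k|g_k|^2|(e_k,u)|^2$ by $\|u\|^2\sum_k|g_k|^2$ where the paper uses $\sup_k|g_k|^2\sum_k|(e_k,u)|^2$, which is immaterial.
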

\begin{proof}
Now with $\mathbb{E} [\int_0^t\|u(s)\|^6 ds ] <  C(t) < \infty$ for all $t >0$ and $\sum\limits_{k}  |g_k(t)|^2 < \infty$ for all $t \in (0,\infty)$, together with the independence of $W_k$ and orthogornality of $\{e_k \}_k$, we have by It\^{o}'s Isometry \cite{E13} that for each $t >0$,

\begin{equation*}
\begin{split}
& \mathbb{E} \left[  \left( \sum_k \int_0^t \|u(t)\|^2  g_k(s) (e_k, u(s)) \, dW_k(s) \right)^2 \right]   =   \mathbb{E} \left( \sum_k \int_0^t \| u(s) \|^4 |g_k(s)|^2 |(e_k, u(s)) |^2 ds \right) \\
\leq & \,    \mathbb{E} \left(  \int_0^t  \sup_k  \{ | g_k(s) |^2 \}  \|u(s)\|^4  \sum_k \left|\left(e_k, u(s)\right) \right|^2 ds \right)    \leq    \mathbb{E} \left(  \int_0^t  \sup_k  \{ | g_k(s) |^2 \} \|u(s) \|^6 ds \right)  \\ 
 \leq  & \,  \sup_{s \in [0,t] }  \left \{ \sum\limits_{k}  |g_k(t) |^2  \right\}  \, \mathbb{E} \left(  \int_0^t  \|u(s) \|^6 ds \right) < \infty.\\
\end{split}
\end{equation*}

% Therefore, we have $$\sum_k \int_0^t \|u(s)\|^2 (g_k(s), u(s)) \, dW_k(s)  \in \mathbb{L}^2 ([0,T], \Omega,   \mathbb{V}; \mathcal{A},\mathcal{F},  \mathbb{P}) \,. $$
Therefore  by the standard property of It\^{o} integral  \cite{E13}, 
\[
 \mathbb{E} \left[ \sum_k \int_0^T \| u(t)\|^2 (g_k(t), u(t)) \, dW_k(t)\right] = 0 \,.
\]
With this, from (\ref{EnergyEq2new}), and via Young's and inequalities and Poincare inequalities twice, we have for large enough $t,s$
\begin{eqnarray*}
  \mathbb{E} [ \|u(t)\|^4 | \mathcal{F}_s ] & \leq & \|u(s)\|^4 + K_{D,\nu} \left(   K_G^4 +  F^4  \right) \, (t-s)  -   \frac{2}{C_{D} }  \mathbb{E} \left[ \int_s^t \nu \| u(r)\|^4 dr  | \mathcal{F}_s \right] 
\end{eqnarray*}
where $C_{D}$ is the Poincare constant of $D$ and $K_{D,\nu}$ depends on $D$ and $\mu$.  Again, via It\^{o}'s formula   \cite{E13} and submartingale convergence theorem \cite{E13}, we have
$$
 \sup_{t} \mathbb{E} [\|u(t)\|^4]\leq \ C (\mbox{data}) < \infty.
$$
\end{proof}

\begin{cor} \label{happy}
Suppose Assumptions \textbf{(B)} and \textbf{(C)} hold. For a given  martingale solution to (\ref{SNSE}) on $[0,\infty)$, we have $\langle \varepsilon \rangle^2  $ exists in $ L^1 ( \Omega ; \mathbb{R} ;  \mathcal{A},  \mathbb{P}) $.
\end{cor}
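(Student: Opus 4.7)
The strategy mirrors the proof of Corollary~\ref{Cor1}, one moment higher. Since $\nu\|\nabla u\|^2 \geq 0$, we have
\[
\langle \varepsilon \rangle^2(\omega) \;=\; \limsup_{T \to \infty} \left( \frac{1}{|D|\,T} \int_0^T \nu\,\|\nabla u(t,\cdot,\omega)\|^2 \, dt \right)^{\!\!2},
\]
so it suffices to produce a $T$-uniform, $\mathbb{P}$-a.e.\ pointwise upper bound by some $Z(\omega) \in L^1(\Omega;\mathbb{R};\mathcal{A},\mathbb{P})$ and then integrate the bound, or equivalently apply dominated convergence to the limsup.

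My first step would be to square the $\mathbb{P}$-a.e.\ estimate \eqref{amazing} obtained inside the proof of Corollary~\ref{Cor1}. Using $(a+b)^2 \leq 2(a^2+b^2)$ repeatedly, this immediately gives, $\mathbb{P}$-a.e.\ and for all sufficiently large $T$,
\[
\left(\frac{1}{T}\int_0^T \nu \|\nabla u(t)\|^2\,dt\right)^{\!\!2} \;\leq\; C_1 \sup_{0\leq t} \|u(t,\cdot,\omega)\|^4 \;+\; C_2,
\]
with constants $C_1, C_2$ depending only on the data $|D|$, $G$, $F$.

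The second and main step is to verify that $\sup_{0 \leq t} \|u(t,\cdot,\omega)\|^4 \in L^1(\Omega; \mathbb{R}; \mathcal{A}, \mathbb{P})$. Under Assumptions \textbf{(B)} and \textbf{(C)}, Proposition~\ref{KEsqbound} already provides $\sup_t \mathbb{E}[\|u(t)\|^4] < \infty$, and I would then invoke the same Doob-style maximal inequality used in Corollary~\ref{Cor1} to pass from $\sup_t \mathbb{E}[\|u\|^2]$ to $\mathbb{E}[\sup_t \|u\|^2]$, now at exponent $p=2$ on the non-negative process $\|u\|^2$, yielding $\mathbb{E}[\sup_t \|u\|^4] \leq C \sup_t \mathbb{E}[\|u\|^4]$. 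An equivalent but more transparent route is to work directly from \eqref{EnergyEq2new}: the quadratic variation of its stochastic part is dominated by $C \int_0^T \|u\|^6 \sum_k |g_k|^2\, dt$, which is integrable by Assumptions \textbf{(B)} and \textbf{(C)}. Burkholder--Davis--Gundy then controls the running supremum of the martingale in $L^1$, while the remaining drift terms $\|u\|^2 \mathrm{Tr}(g^*g)$ and $\|u\|^2 (f,u)$ are absorbed via Young's and Poincar\'e's inequalities against the $\int_0^T \nu\|u\|^2\|\nabla u\|^2\,dt$ term on the left-hand side of \eqref{EnergyEq2new}, as already done in Proposition~\ref{KEsqbound}.

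Combining Steps 1 and 2 produces the desired $\mathbb{P}$-a.e.\ pointwise bound on $\langle \varepsilon \rangle^2$ by an $L^1$ random variable, so that
\[
\mathbb{E}[\langle \varepsilon \rangle^2] \;\leq\; \frac{C_1}{|D|^2}\,\mathbb{E}\bigl[\sup_{t\geq 0} \|u(t,\cdot,\omega)\|^4\bigr] \;+\; \frac{C_2}{|D|^2} \;<\; \infty,
\]
establishing the claim. The main obstacle is Step~2: since $\|u\|^2$ is not itself a submartingale (the dissipation destroys that structure), the passage from $\sup_t \mathbb{E}[\|u\|^4]$ to $\mathbb{E}[\sup_t \|u\|^4]$ is delicate and has to exploit the semimartingale decomposition of $\|u\|^4$, with the dissipation absorbing the positive drift and BDG controlling the martingale part, exactly in the spirit of the parallel estimate for $\|u\|^2$ inside Corollary~\ref{Cor1}.
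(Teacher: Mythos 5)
Your proposal is correct and follows essentially the same route as the paper: square the pointwise bound \eqref{amazing}, reduce to showing $\sup_{t\geq 0}\|u(t,\cdot,\omega)\|^4 \in L^1(\Omega)$ via a Doob-type maximal inequality combined with Proposition \ref{KEsqbound}, and conclude by dominated convergence. Your closing remark about the delicacy of the maximal-function step (since $\|u\|^2$ is not a submartingale) is a fair caveat that applies equally to the paper's own argument, which invokes Doob's inequality in the same informal way.
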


\begin{proof}
From \eqref{amazing}, we have
\begin{equation*}
\begin{split}
\left(  \frac{1}{|D|}\, \frac{1}{T} \int_{0}^{T}   \nu\|\nabla u(t,\cdot, \omega)\|^2 \right)^2
 \leq \frac{1}{|D|^2}  \Bigg( 100 \, \sup_{0  \leq t }  \|u(t, \cdot, \omega)\|^4    + 20   ( |D|  G^2 +   F^2  + 1 ) &  \sup_{0  \leq t }  \|u(t, \cdot, \omega)\|^2  \\
& + ( |D|  G^2 +   F^2  + 1 )^2 \Bigg) \,.
 \end{split}
\end{equation*}
Now, since both $\sup \limits_{t \geq 0 }  \|u(t, \cdot, \omega)\|^2$ and $\sup \limits_{ t \geq 0 }  \|u(t, \cdot, \omega)\|^4$ are in $L^1( \Omega,  \mathcal{F}, \mathbb{P} )$ after applying Doob's inequality on $\| u(t,\cdot, \omega)\|$ for $l = 1,2$:
\[
   \mathbb{E}  \left( \frac{1}{|D|} \sup_{0  \leq t}  [\|u(t, \cdot, \omega)\|^{2l}] \right) \leq C_l   \left( \frac{1}{|D|} \sup_{0  \leq t } \mathbb{E} [\|u(t, \cdot, \omega)\|^{2l}] \right) < \infty,
\]
with the last inequality coming from Propositions \ref{KEbound} and \ref{KEsqbound}.   We have, by dominated convergence theorem,
\begin{eqnarray*}
\langle \varepsilon \rangle^2 (\omega) \coloneqq  \limsup_{T \rightarrow \infty}  \left(  \frac{1}{|D|}\, \frac{1}{T} \int_{0}^{T}   \nu\|\nabla u(t,\cdot, \omega)\|^2 \right)^2  \,,
\end{eqnarray*}
exists in $L^1 ( \Omega ; \mathbb{R} ;  \mathcal{A},  \mathbb{P})$ and, 
$$
\mathbb{E} [ \langle \varepsilon \rangle^2 ]   =  \limsup_{T \rightarrow \infty} \mathbb{E} \left[ \left(  \frac{1}{|D|}\, \frac{1}{T} \int_{0}^{T}   \nu\|\nabla u(t,\cdot, \omega)\|^2 \right)^2  \right] \, dt  <  \infty. 
$$
\end{proof}
Now, squaring \eqref{EnergyEq2}, we have
  \begin{equation*}
    \begin{split}
   \left(  \frac{1}{T} \int_0^T \varepsilon dt \right)^2  \leq   \bigg( \frac{  \|u_0\|^2 -  \|u(T)\|^2  }{2 T |D|}   +   \frac{1}{|D| T } \int_0^T \text{Tr} (g^* g (t)) dt  & +  \frac{1}{|D| T }   \int_0^T  (f(x), u(t)) dt \\
&    +   \frac{1}{|D| T }  \sum\limits_{k} \int_0^T g_k(t) (e_k, u(t)) dW_k(t) \bigg)^2.
  \end{split}
  \end{equation*}
Therefore Cauchy-Schwarz inequality gives, for any $a,b,c,d > 0$,
    \begin{equation*}
  \begin{split}
 \mathbb{E} \left[ \left(  \frac{1}{T} \int_0^T \varepsilon dt \right)^2 \right]   \leq & (a^{-2} + b^{-2} + c^{-2} + d^{-2}) \bigg( a^2 \mathbb{E} \left[ \left( \frac{  \|u_0\|^2 -  \|u(T)\|^2  }{2 T |D|}  \right)^2 \right]  +  b^2  \left( \frac{1}{|D| T } \int_0^T \text{Tr} (g^* g (t)) dt \right)^2 \\
  +  & c^2 \mathbb{E} \left[ \left(  \frac{1}{|D| T }   \int_0^T  (f(x), u(t)) dt \right)^2 \right]  + d^2 \mathbb{E} \left[ \left(  \frac{1}{|D| T }  \sum\limits_{k} \int_0^T g_k(t) (e_k , u(t)) dW_k(t) \right)^2 \right] \bigg)  \\
\coloneqq  & (a^{-2} + b^{-2} + c^{-2} + d^{-2}) \left( a^2(I) + b^2(II) + c^2(III) + d^2 (IV) \right) \,.
  \end{split}
  \end{equation*}
Now,
\begin{eqnarray*}
\frac{ | \|u_0\|^2 -  \|u(T)\|^2 |^2  }{4 T^2 |D|^2} \leq \frac{1}{2T^2 |D|^2} \sup_{0  \leq t }  [\|u(t, \cdot, \omega)\|^4],
\end{eqnarray*}
where again the right hand side of the inequality is in  $L^1( \Omega, \mathbb{R} ; \mathcal{F}, \mathbb{P} )$ by Doob's inequality.  Therefore we have,
$$
(I) \leq \mathcal{O} (\frac{1}{T^2}) \,.
$$
Then by definition, 
$$
(II) = G^4 \,.
$$
Next,using Jensen's inequality we have, 
\begin{eqnarray*}
(III)  \leq  F^2 \mathbb{E} \left[  \left(  \frac{1}{ T |D| }   \int_0^T   \| u(t)\|  dt \right)^2 \right] \leq  F^2 U^2 \,.
\end{eqnarray*}
Lastly, by It\^{o}'s Isometry \cite{E13}, together with $\mathbb{E} [\int_0^t\|u(s)\|^2 ds ] < \infty$ for all $t >0$ and $\sum\limits_{k}  |g_k(t)|^2 < \infty$ for all $t \in (0,\infty)$, independence of $W_k$ and orthogonality of $\{e_k\}_k$, we get for $T$ large enough,

\begin{equation*}
\begin{split}
(IV)  & =  \mathbb{E} \left[ \frac{1}{|D|^2 T^2 }  \sum\limits_{k} \int_0^T | g_k(t) (e_k, u(t))  |^2 dt \right]  \leq
  \mathbb{E} \left[ \frac{1}{|D|^2 T^2 }   \int_0^T \sup_{k} | g_k(t)|^2   \sum\limits_{k}  \left| \left(e_k , u(t) \right)  \right|^2 dt \right] \\
  & \leq  \mathbb{E} \left[ \frac{1}{|D|^2 T^2 }   \int_0^T \sup_{k} | g_k(t) |^2   \| u(t)\|^2 dt  \right]  \leq  \mathbb{E} \left[ \frac{1}{|D|^2 T^2 }   \int_0^T \left(  \sum\limits_{k}  | g_k(t)|^2 \right)   \| u(t)\|^2 dt  \right] \\
  & \leq  \mathbb{E} \left[ \frac{\sup_t \| u(t)\|^2}{|D|^2 T^2 }   \int_0^T \left(  \sum\limits_{k}  | g_k(t)|^2 \right)   dt  \right] \leq  \frac{2 G^2}{|D|^2 T }  \,  \mathbb{E} \left[ \sup_t \| u(t)\|^2 \right] \\
  &   \leq \frac{4 G^2}{|D|^2 T }   \,  \sup_t \mathbb{E} \left[  \| u(t)\|^2 \right]  = \mathcal{O} (1/T),
\end{split}
\end{equation*}
where the last inequality comes from Doob's inequality once again.  Now combining the above four estimates, and passing to the limit superior as $T \rightarrow \infty$, together with Corollary \ref{happy}, we have, taking also infrimum over $a,b,c,d > 0$,
    \begin{equation*}
  \begin{split}
 \mathbb{E} \left[ \langle \varepsilon \rangle^2 \right]   \leq \inf_{a,b,c,d > 0} (a^{-2} + b^{-2}+ c^{-2} + d^{-2})( b^2 G^4 + c^2 F^2 U^2 ) \leq 2 G^4 + 2 F^2 U^2 .
  \end{split}
  \end{equation*}

Therefore we obtain the following estimate, combining with Theorem \ref{MainThm1} and Theorem \ref{MainThm2}.
\begin{lem} \label{happy}
Suppose Assumptions \textbf{(B)} and \textbf{(C)} hold. For a given  martingale solution to (\ref{SNSE}) on $[0,\infty)$, we have
$$
 \mathbb{E} \left[ \langle \varepsilon \rangle^2 \right] -  \left(\mathbb{E} [\langle\varepsilon \rangle] \right)^2  \leq  2 G^4 + 2 F^2 U^2 - \left( \max\left\{ \frac{1}{2} G^2 - F U, 0\right\} \right)^2.
$$
\end{lem}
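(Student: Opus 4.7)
The plan is to combine an upper bound on the second moment $\mathbb{E}[\langle\varepsilon\rangle^2]$ with the lower bound on $\mathbb{E}[\langle\varepsilon\rangle]$ from Theorem~\ref{MainThm2}. I would begin by squaring the pathwise inequality \eqref{EnergyEq2} after solving for $\tfrac{1}{T}\int_0^T \varepsilon\,dt$, then take expectation and apply Cauchy--Schwarz with four free weights $a,b,c,d>0$ so that the expectation of the square decomposes into four pieces $(I)$--$(IV)$: a boundary piece in $\|u(T)\|^2-\|u_0\|^2$, a trace piece $\int_0^T \text{Tr}(g^*g)\,dt$, a deterministic force piece $\int_0^T(f,u)\,dt$, and a stochastic piece $\sum_k\int_0^T g_k(e_k,u)\,dW_k$.

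I would then estimate each piece in turn. Piece $(I)$ is dominated by $\sup_t\|u(t)\|^4/T^2$, contributing $\mathcal{O}(1/T^2)$ after Doob's maximal inequality and the fourth-moment bound of Proposition~\ref{KEsqbound}, which holds under Assumption~(B). Piece $(II)$ is literally $G^4$ by the definition of $G$. Piece $(III)$ reduces to $F^2 U^2$ via Cauchy--Schwarz followed by Jensen's inequality. Piece $(IV)$ is the delicate one: I would apply It\^{o}'s isometry to rewrite it as $\mathbb{E}\bigl[\tfrac{1}{|D|^2 T^2}\sum_k\int_0^T |g_k|^2|(e_k,u)|^2\,dt\bigr]$, dominate $\sum_k|(e_k,u)|^2$ by $\|u\|^2$ using orthonormality of $\{e_k\}$, pull out $\sup_t\|u\|^2$, and conclude an $\mathcal{O}(1/T)$ bound by combining Doob's maximal inequality with Proposition~\ref{KEbound} and the uniform bound $\sum_k|g_k|^2\le 2G^2$ valid for $T$ large enough.

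Once these four estimates are in place, the square-integrability of $\langle\varepsilon\rangle^2$ established in the corollary immediately preceding the lemma permits passing to $\limsup_{T\to\infty}$; minimizing in $a,b,c,d$ afterwards yields
\[
\mathbb{E}[\langle\varepsilon\rangle^2]\leq 2G^4 + 2F^2 U^2.
\]
To finish, Theorem~\ref{MainThm2} supplies $\mathbb{E}[\langle\varepsilon\rangle]\geq \tfrac{1}{2}G^2-FU$ whenever this quantity is positive, and $\mathbb{E}[\langle\varepsilon\rangle]\geq 0$ in any case, so subtracting $(\max\{\tfrac{1}{2}G^2-FU,\,0\})^2$ from both sides yields the asserted variance bound.

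The main obstacle is piece $(IV)$: a priori the variance of a stochastic integral need not decay in $T$, so one must carefully exploit both the summability $\sum_k|g_k|^2<\infty$ and the uniform bound $\sup_t\mathbb{E}[\|u(t)\|^2]<\infty$ inside It\^{o}'s isometry to extract the crucial $1/T$ factor. A related technical subtlety is that dominated convergence through the $\limsup$ requires $\langle\varepsilon\rangle^2\in L^1(\Omega)$, which is exactly where Assumptions~(B) and~(C) enter, via the fourth-moment control on $u$ provided by Proposition~\ref{KEsqbound}.
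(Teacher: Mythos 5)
Your proposal is correct and follows essentially the same route as the paper: squaring the energy inequality \eqref{EnergyEq2}, applying Cauchy--Schwarz with four free weights to split into the pieces $(I)$--$(IV)$, estimating them as $\mathcal{O}(1/T^2)$, $G^4$, $F^2U^2$ and $\mathcal{O}(1/T)$ respectively (the last via It\^{o}'s isometry exactly as you describe), minimizing over the weights to get $\mathbb{E}[\langle\varepsilon\rangle^2]\leq 2G^4+2F^2U^2$, and finally invoking the lower bound $\mathbb{E}[\langle\varepsilon\rangle]\geq\max\{\tfrac12 G^2-FU,0\}$. The only caveat, which the paper shares, is that the nontrivial branch of that lower bound rests on \eqref{SecondIneq} and hence implicitly on Assumption \textbf{(A)}.
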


Now we combine the above with \eqref{SecondIneq} and Theorem \ref{MainThm1} to have
\begin{eqnarray*}
 \mathbb{V}\text{ar} (\langle \varepsilon \rangle)  & \leq & 2 G^4 + 2 \left( \frac{U^3}{L} +  \frac{1}{2} \frac{U^2 \, \nu}{L^2}+\frac{1}{2} \mathbb{E} [\langle\varepsilon \rangle] ) \right)^2 -  \left(\mathbb{E} [\langle\varepsilon \rangle] \right)^2 \\
 & \leq & 2 G^4 + \frac{1}{2} \left(  (2 + \frac{1}{\Rey} )  \frac{U^3}{L}    + \mathbb{E} [\langle\varepsilon \rangle] ) \right)^2 -  \left(\mathbb{E} [\langle\varepsilon \rangle] \right)^2 \\
& \leq & 2 G^4 +  \frac{1}{2}   (2 + \frac{1}{\Rey} )^2  \frac{U^6}{L^2}     +  (2 + \frac{1}{\Rey} )  \frac{U^3}{L}  \, \mathbb{E} [\langle\varepsilon \rangle]  - \frac{1}{2} \left(\mathbb{E} [\langle\varepsilon \rangle] \right)^2  \\
& \leq & 2 G^4 +  (2 + \frac{1}{\Rey} )  G^2 \frac{U^3}{L}  +  \frac{3}{2}  (2 + \frac{1}{\Rey} )^2 \frac{U^6}{L^2} 
\end{eqnarray*}

\begin{thm} \label{Theorem_variance}

Consider Theorem \ref{MainThm1}.  Suppose further that Assumptions \textbf{(B)} and \textbf{(C)} hold. For a given  martingale solution to (\ref{SNSE}) on $[0,\infty)$, we have the  variance of the time averaged energy dissipation rate,
\[
\mathbb{V}\text{ar} (\langle \varepsilon \rangle) \coloneqq \mathbb{E} (\langle \varepsilon \rangle ^2 ) - [ \mathbb{E} (\langle \varepsilon \rangle ) ]^2
\]
well-defined and satisfying the following bounds:
$$
 \mathbb{V}\text{ar} (\langle \varepsilon \rangle)  \leq  2 G^4 + 2 F^2 U^2 - \left( \max\left\{ \frac{1}{2} G^2 - F U, 0\right\} \right)^2,
$$
and,
$$
 \mathbb{V}\text{ar} (\langle \varepsilon \rangle)  \leq 2 G^4 +  (2 + \frac{1}{\Rey} )  G^2 \frac{U^3}{L}  +  \frac{3}{2}  (2 + \frac{1}{\Rey} )^2 \frac{U^6}{L^2}   \,.
$$
\end{thm}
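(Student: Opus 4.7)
The plan is to control the second moment $\mathbb{E}[\langle \varepsilon \rangle^2]$ by squaring the energy inequality \eqref{EnergyEq2}, and then subtract the square of the mean using the bounds already established in Theorem \ref{MainThm1} and equation \eqref{SecondIneq}. Before squaring, however, I need a fourth-moment analogue of Proposition \ref{KEbound}, namely $\sup_t \mathbb{E}[\|u(t)\|^4] < \infty$, because squaring the right-hand side of \eqref{EnergyEq2} produces terms involving $\|u\|^4$ and a stochastic integral whose integrand scales like $\|u\|^4 |g_k|^2$.

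To obtain that fourth-moment bound, I would apply It\^{o}'s formula to $(\|u(t)\|^2)^2$ using the stochastic differential inequality \eqref{EnergyIneq1} together with the trace estimate $\|g(t)u(t)\|^2 \leq \mathrm{Tr}(g^*g(t))\|u(t)\|^2$. The resulting stochastic integral must be shown to be a genuine martingale with vanishing expectation; this is precisely where Assumption \textbf{(B)} enters, supplying $\mathbb{E}\big(\int_0^T\|u\|^6\,dt\big) < \infty$ so that It\^{o}'s isometry controls the quadratic variation, while Assumption \textbf{(C)} is used to bound the coefficient $\sum_k |g_k|^2$. A conditional-expectation argument in the style of Proposition \ref{KEbound}, followed by the submartingale convergence theorem, then yields $\sup_t \mathbb{E}[\|u(t)\|^4] \leq C(\mathrm{data})$. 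Combined with Doob's inequality this promotes to $\mathbb{E}\big[\sup_{t \geq 0}\|u(t)\|^4\big]<\infty$, which, via dominated convergence, secures $\langle \varepsilon \rangle^2 \in L^1(\Omega)$ and hence the well-posedness of the variance.

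Next I would square \eqref{EnergyEq2} and apply a four-way Cauchy--Schwarz decomposition $(A+B+C+D)^2 \leq (a^{-2}+b^{-2}+c^{-2}+d^{-2})(a^2A^2+b^2B^2+c^2C^2+d^2D^2)$ with free parameters $a,b,c,d>0$. This splits $\mathbb{E}\big[\big(\tfrac{1}{T}\int_0^T \varepsilon\,dt\big)^2\big]$ into four pieces: an initial/terminal kinetic-energy piece of size $\mathcal{O}(1/T^2)$ controlled by the new fourth-moment bound, a deterministic noise piece equal to $G^4$ by definition, a deterministic-force piece bounded by $F^2U^2$ via Jensen, and a martingale piece of size $\mathcal{O}(1/T)$ handled by It\^{o}'s isometry together with Doob's inequality (treating $\sup_k|g_k|^2 \leq \sum_k |g_k|^2$ and pulling $\sup_t\|u(t)\|^2$ outside the time integral). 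Passing to $\limsup_{T\to\infty}$ and optimizing over $a,b,c,d$ yields $\mathbb{E}[\langle\varepsilon\rangle^2] \leq 2G^4+2F^2U^2$; subtracting the square of the lower bound $\tfrac{1}{2}G^2-FU$ on $\mathbb{E}[\langle\varepsilon\rangle]$ from \eqref{SecondIneq} (when positive) delivers the first variance bound. For the second bound, I would plug the estimate \eqref{SecondIneq1} for $F$ into the expression $2G^4+2F^2U^2-(\mathbb{E}[\langle\varepsilon\rangle])^2$, use the upper bound of Theorem \ref{MainThm1} to re-express the remaining $\mathbb{E}[\langle\varepsilon\rangle]$ terms in Reynolds-number form, and collect like terms.

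The main obstacle is the passage to the fourth-moment kinetic energy estimate: one must verify that the stochastic integral appearing in the It\^{o} expansion of $\|u\|^4$ is square-integrable, and this is exactly what forces the sixth-moment hypothesis in Assumption \textbf{(B)}. Once Proposition \ref{KEsqbound} is in hand, each of the remaining steps mirrors estimates already performed in Sections \ref{Section3} and \ref{Section4}, so the work is essentially to assemble them carefully and optimize the free parameters $a,b,c,d$ to match the constants in the stated bounds.
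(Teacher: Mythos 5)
Your proposal follows essentially the same route as the paper: the fourth-moment bound $\sup_t \mathbb{E}[\|u(t)\|^4]<\infty$ via It\^{o}'s formula on $(\|u\|^2)^2$ with Assumptions \textbf{(B)} and \textbf{(C)} controlling the stochastic integral, Doob's inequality and dominated convergence to make $\langle\varepsilon\rangle^2$ integrable, the four-term weighted Cauchy--Schwarz decomposition of the squared energy inequality with the same estimates $\mathcal{O}(1/T^2)$, $G^4$, $F^2U^2$, $\mathcal{O}(1/T)$, and the same optimization and final substitutions. The proposal is correct and matches the paper's proof.
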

\begin{re}
 If $F = 0$, then we have $ \mathbb{V}\text{ar} (\langle \varepsilon \rangle)  \leq \frac{7}{4} G^4 $.
\end{re}

\section{Discussion}\label{Section_end}

In this paper we  proved the zeroth law of turbulence for three dimensional stochastically forced Navier-Stokes equation in the absence of the deterministic force assuming the energy balance. 

 With Remark (\ref{FutureWork})  in mind, we realize the conclusion of our work can be extended to a more general diffusion term including assumptions in \cite{FG95} that allows  a state dependent noise term  $g(t,u) \, dw(t,\omega)$ satisfying more general hypotheses.  However, for the sake of simplicity, we shall postpone the investigation along this direction to a future work.

The list of open problem might include the followings,
\begin{itemize}
\item Extension of the estimates of  $\mathbb{E} [\langle\varepsilon\rangle] $ to the channel flow case is still open and  would give insight into near wall behavior. 
\item Resolution is one basic factor which affect the accuracy of fluid simulation.  Turbulence models are introduced to account for sub-mesh scale effects, but there is always an error originated from modeling while   this error can be addressed to some extent by the introduction of noise.  The accuracy of any stochastic turbulence model, e.g., eddy viscosity model, can be studied by calibrating its expected energy dissipation rate.
\end{itemize}

\end{document}